\newcommand{\N}{\mathbb{N}}
\newcommand{\CT}{\mathcal{T}}
\newcommand{\Id}{\text{Id}}
\newcommand{\CF}{\mathcal{F}}
\newcommand{\ep}{s}
\newcommand{\rest}{\upharpoonright}
\renewcommand{\phi}{\varphi}
\newcommand{\actson}{\mathrel{\curvearrowright}}
\newcommand{\eps}{\varepsilon}
\newcommand{\Mgraph}{M}
\newcommand{\MgraphV}{{V(M)}}
\newcommand{\MgraphE}{{E(M)}}
\DeclareMathOperator{\wdeg}{wdeg}
\DeclareMathOperator{\Aut}{Aut}
\DeclareMathOperator{\LIM}{LIM}
\DeclareMathOperator{\Image}{Im}
\DeclareMathOperator{\proj}{proj}
\definecolor{amethyst}{rgb}{0.6, 0.4, 0.8}
\definecolor{ao}{rgb}{0.0, 0.5, 0.0}
\definecolor{carrotorange}{rgb}{0.93, 0.57, 0.13}
\definecolor{carmine}{rgb}{0.59, 0.0, 0.09}
\newenvironment{propositionproof}{
  
  \begin{proof}
}{\end{proof}}
\newtheorem{thm}{Theorem}[section]
\newtheorem{prop}{Proposition}[section]
\newtheorem{cor}[prop]{Corollary}
\newtheorem{lem}{Lemma}[section]
\newtheorem{numclaim}[prop]{Claim}
\theoremstyle{definition}
\newtheorem{definition}{Definition}[section]
\begin{document}

\title[Borel Perfect Matchings in Quasi-transitive Graphs]{Borel Fractional Perfect Matchings in Quasi-transitive Amenable Graphs}
\author[Sam Murray]{Sam Murray}
\address{McGill University Department of Mathematics and Statistics, 805 Sherbrooke St W, Montreal, Quebec H3A 1G3}
\email{samuel.murray@mail.mcgill.ca}

\maketitle

\begin{abstract}
    We show that if a locally finite Borel graph with quasi-transitive amenable components admits a fractional perfect matching, it will admit a Borel fractional perfect matching. In particular, if a countable amenable quasi-transitive graph admits a fractional perfect matching then its Bernoulli graph admits a Borel fractional perfect matching. 
\end{abstract}

\section{Introduction}\label{sec:intro}

The object of descriptive combinatorics is to take familiar graph theoretic problems on a Borel graph and study what happens when one requires that the solution be Borel, Baire measurable with respect to a compatible topology, or measurable with respect to a complete Borel measure. A problem of particular interest in the descriptive context due to its application to measurable circle squaring \cite{meascirclesquaring} and more recently Borel circle squaring \cite{borelcirclesquaring} is the perfect matching problem: when does a Borel graph admit a descriptive perfect matching?  A result of Laczkovich shows that there exist 2-regular graphs that admit a perfect matching but no Borel, Baire measurable, or measurable perfect matching \cite{Laczkovich_nomatch}. On the other hand, Bowen, Kun and Sabok proved that if a graphing is regular, bipartite, one-ended, and hyperfinite, then it admits a measurable perfect matching \cite{bowen2022perfect}. In a subsequent paper, Bowen,  Poulin and Zomback showed the same thing holds in the Baire measurable context \cite{bowen2022oneended}.\\ 

One technique used in this paper is to consider the linear relaxation of the perfect matching problem, called the fractional perfect matching problem, where we allow the values on the edges to be in the interval $[0,1]$ instead of just $\{0,1\}$. Once a solution is obtained to this relaxed problem, it can then be rounded to a solution for the integral problem. The existence of measurable fractional perfect matchings, specifically measurable fractional perfect matchings that are non-integer everywhere, is what was used in the paper of Bowen, Sabok, and Kun to prove the existence of measurable perfect matchings \cite{bowen2022perfect}.  A similar idea was used in a follow up paper on the Gardner conjecture \cite{bowen2022uniformgardnerconjecturerounding}.\\

A natural question to ask is when do Borel or measurable fractional perfect matchings exist? In the measure context, an argument due independently to Lov\'{a}sz \cite{Lovászbook} and to Cie\'sla and Sabok \cite{hallsabelain} shows that any locally finite hyperfinite
graphing that admits a perfect matching will admit a measurable fractional perfect matching. Outside of the measure context, little can be said: in an upcoming paper by Bernshteyn and Weilacher \cite{felixbairetree}, they construct a polynomial growth Borel forest on a Polish space that has no Borel fractional perfect matching, even after throwing away an invariant meager set. In contrast to this result, we show:
\begin{thm}\label{thm:mainthm}
    If $G$ is a locally finite Borel graph that is componentwise quasi-transitive and amenable, then $G$ has a Borel fractional perfect matching.
\end{thm}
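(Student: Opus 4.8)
The plan is to reduce the problem to a local, combinatorial statement about finite pieces of the graph, and then to invoke a compactness/local algorithm argument that is compatible with Borel measurability. Since $G$ is componentwise quasi-transitive and amenable, each component $C$ carries a Følner sequence, and by a result of the kind used in Hall–Rado arguments, the hypothesis that $G$ (equivalently, each component) admits a fractional perfect matching is equivalent to a "no deficient set" condition: for every finite $S \subseteq V(C)$, the fractional matching polytope constraints are satisfiable, which by LP duality amounts to a Hall-type inequality on finite sets. The first step is therefore to record this equivalence and to extract, from amenability, a sequence of finite subgraphs $G_n \subseteq C$ exhausting $C$ whose boundary is negligible in the appropriate sense.

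Next I would build the Borel fractional perfect matching by an averaging procedure over Følner sets. Fix a Borel way to assign to (almost) every vertex $v$ a Følner set $F_n(v)$ (for instance via a Borel selection of an increasing exhaustion, using that the component relation is hyperfinite — here amenability gives hyperfiniteness of the component equivalence relation, a point I would state carefully). On each finite induced subgraph $G[F]$ one can solve the fractional perfect matching LP; the issue is that $G[F]$ need not itself admit a fractional perfect matching because of boundary effects. The remedy is to work on the slightly enlarged region and use the amenability to control the boundary contribution: solve a relaxed LP on $G[F]$ allowing a small defect concentrated near $\partial F$, obtain a solution $f_F$, and then average $f_F$ over the Følner sets $F = F_n(v)$ as $n \to \infty$. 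Because the boundary fraction tends to $0$ along a Følner sequence, the averaged limit $f = \lim_n \frac{1}{|F_n|}\sum \dots$ (extracted along a subsequence, or via an ultralimit / weak-$\ast$ limit in $\ell^\infty(E(G))$) satisfies $\sum_{e \ni v} f(e) = 1$ at every vertex, i.e. it is a genuine fractional perfect matching.

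The key remaining point is Borelness of the construction. Here I would exploit that quasi-transitivity makes the "shape" of a Følner exhaustion locally determined up to finitely many isomorphism types, so the LP solutions $f_F$ can be chosen canonically (e.g. the lexicographically least optimal vertex of the polytope, or the analytic center) as a Borel function of the isomorphism type of the pointed finite subgraph; this makes $f_F$ a Borel function on $G$. The averaging and limit operations preserve Borelness provided the subsequence/ultrafilter is fixed in advance: a weak-$\ast$ limit can be realized in a Borel way by noting that on the compact space $[0,1]^{E(G)}$ the relevant limit is computed coordinatewise, and each coordinate limit along a fixed sequence is Borel. Alternatively, one can avoid limits entirely by a "good set of Følner scales" argument: for each $\varepsilon$ there is $n$ with $|\partial F_n(v)|/|F_n(v)| < \varepsilon$, and a diagonalization across a fixed sequence $\varepsilon_k \to 0$ produces the matching directly.

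The main obstacle I expect is precisely the interaction between the boundary-defect LP solutions and Borel canonicity: one must ensure that the "small defect near the boundary" can be chosen in a translation-consistent, Borel fashion across overlapping Følner sets, so that the averages actually converge rather than oscillate. Quasi-transitivity is what should save this — it collapses the a priori infinitely many local configurations into finitely many orbit types, so a single canonical rule ("route the defect toward the boundary along shortest paths, breaking ties by the Borel linear order on finite pointed graphs") suffices and is automatically Borel. A secondary technical point is verifying that amenability of a quasi-transitive graph indeed yields a Borel Følner exhaustion of the components with the uniformity needed above; this should follow from hyperfiniteness of the component equivalence relation together with a Borel selection theorem.
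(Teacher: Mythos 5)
Your proposal takes a genuinely different route from the paper, but it has gaps that I don't think can be closed as written.

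The paper's argument never constructs a Borel F{\o}lner exhaustion and never takes a limit in the Borel category. Instead it factors the problem through a fixed ``type multigraph'' $\Mgraph$ whose vertices and edges are isomorphism types of rooted/edge-rooted components, via Borel maps $\phi_V,\phi_E$. Quasi-transitivity is used exactly once, and decisively: it makes the image of each component of $G$ under $(\phi_V,\phi_E)$ a \emph{finite} weighted multigraph. An automorphism-invariant fractional perfect matching on the component (obtained by classical, componentwise averaging over $\Aut(G)$, using Salvatori's theorem that $\Aut(G)$ is amenable) descends to a weighted fractional perfect matching on this finite multigraph; one then uniformizes a Borel relation whose sections are the (compact, nonempty) LP polytopes over these finite multigraphs and pulls the resulting Borel solution back along $\phi_E$. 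No limits, no F{\o}lner sets in the Borel world, and in particular no hyperfiniteness.

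Your approach has three specific problems. First, you assert that amenability of the components gives hyperfiniteness of the connectedness relation; this is not a theorem (in the Borel setting it is not known even whether componentwise amenability implies measure-hyperfiniteness, and the paper certainly does not use it). Without hyperfiniteness your ``Borel F{\o}lner exhaustion via a Borel selection theorem'' has no foundation. Second, you claim quasi-transitivity collapses the shapes of F{\o}lner sets to finitely many isomorphism types, but that is false: the F{\o}lner sets grow, so their isomorphism types range over an infinite family, and no finite-orbit argument applies to them. The object that quasi-transitivity makes finite is the image of the whole (infinite) component in the type multigraph, not any individual F{\o}lner set; you have not isolated this. Third, even granting a Borel F{\o}lner exhaustion and canonical LP solutions $f_F$, the passage to the limit is not Borel as you describe it: a weak-$\ast$ limit of a sequence of Borel functions need not exist pointwise, and a nonprincipal ultralimit is not a Borel operation. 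The paper's own use of $\LIM$ occurs only in the classical Proposition~\ref{invar}, applied separately to each component; the Borel content of the proof lives entirely in the finite type multigraph, where no limit is needed.

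In short, the proposal correctly intuits that one should exploit the finitely many local orbit types and some averaging, but it applies averaging and compactness at the wrong level (to growing finite subgraphs, inside the Borel category) rather than the paper's level (to each infinite component classically, then to a single finite quotient object Borel-ly). The missing idea is the type multigraph and the reduction of the Borel problem to a finite LP per component.
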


In particular, this shows that quasi-transitive Borel graphs with polynomial growth will have Borel fractional perfect matchings. This also gives the following corollary about the Bernoulli graphs:
\begin{cor}
    If $G$ is a countable quasi-transitive amenable graph with a fractional perfect matching, then its Bernoulli graph will admit a Borel fractional perfect matching.
\end{cor}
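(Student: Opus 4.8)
I will reduce the corollary to Theorem~\ref{thm:mainthm}. Write $\BG(G)$ for the Bernoulli graph of $G$; the plan is to check that $\BG(G)$ is a locally finite Borel graph that is componentwise quasi-transitive and amenable and that admits a fractional perfect matching, and then to invoke Theorem~\ref{thm:mainthm}. Recall how $\BG(G)$ is built. First one fixes a \emph{countable} group $\Gamma$ acting on $G$ with finitely many vertex orbits; such a $\Gamma$ exists, since if $v_1,\dots,v_k$ represent the $\Aut(G)$-orbits on $V(G)$ and, for $i\le k$ and each $w$ in the $\Aut(G)$-orbit of $v_i$, we fix $g_{i,w}\in\Aut(G)$ with $g_{i,w}v_i=w$, then $\Gamma:=\langle g_{i,w}\rangle$ is countable (since $V(G)$ is), and as each vertex $u$ lies in the $\Aut(G)$-orbit of some $v_i$ and hence equals $g_{i,u}v_i$ with $g_{i,u}\in\Gamma$, the group $\Gamma$ has at most $k$ orbits on $V(G)$. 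One then spreads $G$ over the free part of the Bernoulli shift $\Gamma\actson[0,1]^\Gamma$; because that action is free, the resulting $\BG(G)$ is a locally finite Borel graph on a standard Borel space all of whose connected components are isomorphic, as graphs, to $G$.

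Granting this, most of the required properties are immediate: local finiteness and Borelness hold by construction, and since every component of $\BG(G)$ is graph-isomorphic to $G$ — and quasi-transitivity and amenability of a graph are graph-isomorphism invariants — $\BG(G)$ is componentwise quasi-transitive and componentwise amenable because $G$ is. Finally, $\BG(G)$ admits a fractional perfect matching, and here no measurability is needed: fix a fractional perfect matching $f\from E(G)\to[0,1]$ of $G$ and, using the axiom of choice, a graph isomorphism $\phi_C\from C\to G$ for each component $C$ of $\BG(G)$, and define $f'$ on $E(\BG(G))$ by $f'(e)=f(\phi_C(e))$ when $e$ is an edge of the component $C$. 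For any vertex $u$, lying in some component $C$, its incident edges are exactly the edges of $C$ at $u$, and $\phi_C$ puts these in bijection with the edges of $G$ at $\phi_C(u)$, so $\sum_{e\ni u}f'(e)=\sum_{e'\ni\phi_C(u)}f(e')=1$; hence $f'$ is a fractional perfect matching of $\BG(G)$.

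All the hypotheses of Theorem~\ref{thm:mainthm} are now in place, so $\BG(G)$ admits a Borel fractional perfect matching, which is exactly the corollary. The only part carrying genuine content is the construction of $\BG(G)$ — knowing that a countable quasi-transitive group exists, and that passing to the free part makes the components honest copies of $G$ rather than proper quotients of it — and this is routine (and presumably done in an earlier section); once it is granted, the verification above, and in particular the transfer of the fractional perfect matching along the $\phi_C$, is purely combinatorial and uses no topology or measure. I expect the only real obstacle to be bookkeeping in that construction, such as dealing with nontrivial vertex stabilizers and the degenerate case where $G$ has a finite component, in which $\BG(G)$ is just a Borel disjoint union of copies of $G$ and the conclusion is trivial.
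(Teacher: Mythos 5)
Your reduction to Theorem~\ref{thm:mainthm} is exactly what the paper intends; the corollary is stated without a separate proof, with the needed facts about the Bernoulli graph simply asserted in the preliminaries. The one genuine deviation is in the construction of $\BG(G)$. The paper defines the Bernoulli graph of $G$ to be the space of isomorphism classes of \emph{all} rooted labeled graphs $(G,r,l)$ — the quotient of $V(G)\times[0,1]^{V(G)}$ by the full $\Aut(G)$-action, with no restriction to a free part — whereas you build the Schreier graph of the free part of a Bernoulli shift over a countable group $\Gamma\le\Aut(G)$ with finitely many orbits. These are not the same object, and the difference matters: for a labeling $l$ with nontrivial $\Aut(G)$-stabilizer, the component of $[(G,r,l)]$ in the paper's Bernoulli graph is a proper quotient of $G$, not a copy. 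In the extreme case of a constant labeling on a vertex-transitive $G$ (say $G=\Z$) that component is a single vertex with no incident edges, which has no fractional perfect matching, so the hypotheses of Theorem~\ref{thm:mainthm} actually fail on such components. Your implicit restriction to the free/aperiodic part is therefore not just bookkeeping but essential to the argument, and should be flagged: if one adopts the paper's construction verbatim, one must first discard the labelings with nontrivial stabilizer (an invariant Borel set), and it is worth saying so. (This is arguably an oversight in the paper itself.)

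The remaining verifications are correct: quasi-transitivity and amenability are graph-isomorphism invariants, so they pass from $G$ to components isomorphic to $G$; and a fractional perfect matching of $G$ transports to each component via arbitrary, non-Borel, AC-chosen isomorphisms, which is all that is needed since Theorem~\ref{thm:mainthm} requires only the existence of a fractional perfect matching, not a Borel one. You are also correctly reading Theorem~\ref{thm:mainthm} as carrying the hypothesis that $G$ admits a fractional perfect matching — the printed theorem statement omits it, but the abstract and the chain Proposition~\ref{invar} followed by Proposition~\ref{main_prop} both require it, so this is a typo in the paper rather than a gap in your argument.
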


We will also give a result that substitutes amenability for a tiling condition. The proof is based on a similar idea as Theorem \ref{thm:mainthm}.
\begin{thm}\label{thm:tileable_graph}
    If $G$ is a locally finite Borel bipartite graph satisfying Halls theorem, and $G$ is Borel symmetrical tilable, then $G$ admits a Borel fractional perfect matching.
\end{thm}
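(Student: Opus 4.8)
The plan is to reduce the construction to finitely many vertices at a time and then glue, splitting the work into an easy ``interior'' step and a hard ``interface'' step. First I would unwind the hypothesis that $G$ is Borel symmetrically tilable to fix a Borel partition of $V(G)$ into finite tiles $\mathcal{T} = \{T_i\}$, together with whatever ``symmetry'' data the definition attaches to each tile. If each induced tile $G[T]$ happened to satisfy Hall's condition on both sides it would have a finite bipartite perfect matching, one could put weight $0$ on every cross-tile edge, and the union over tiles would already be a \emph{perfect} matching of $G$ --- Borel, because one may pick the lexicographically least such matching on each canonically labeled finite tile. Since the theorem promises only a \emph{fractional} perfect matching, I expect the symmetric tiles are not internally matchable in general, so the construction must be: (Step A) fix, in a Borel way, the fractional weights on the cross-tile edges, and then (Step B) for each tile fill in a fractional matching on $E(G[T])$ compatible with those boundary weights, so that every vertex ends with degree exactly $1$.

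Step B is routine. Fix once and for all a Borel linear order on the underlying standard Borel space $V(G)$; restricted to a finite tile it is a finite order, giving $T$ a canonical vertex enumeration and turning $G[T]$ into a canonically labeled finite bipartite graph that depends Borel-measurably on $T$. Once the weights on cross-tile edges incident to $T$ are in hand, the fractional matchings of $G[T]$ making every vertex of $T$ reach degree $1$ form a convex polytope, which Step A will have arranged to be nonempty; take its barycenter. This is a canonical interior solution, it is an admissible fractional matching, and it is a Borel function of the labeled tile together with its boundary data. Gluing is then automatic: cross-tile edges already carry their Step A weights, interior edges of each tile carry their barycenter weights, and every vertex has degree $1$ by construction, so the resulting $f \colon E(G) \to [0,1]$ is a Borel fractional perfect matching of $G$.

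The main obstacle is Step A. Hall's condition for $G$ does \emph{not} restrict to an induced tile $G[T]$ --- a subset of a tile can have all of its remaining neighbors outside the tile --- so the deficiency of each tile is real, and bounding and routing it consistently is exactly what ``symmetrical'' must buy us (via, for instance, a part-swapping automorphism of each tile, or tiles built along a perfect matching of $G$ furnished by Hall's theorem). Concretely I would form the Borel auxiliary bipartite graph whose vertices are the ``surplus'' vertices of the tiles --- those left unsaturated by a canonical maximum internal matching --- and whose edges are the admissible cross-tile pairings of a surplus vertex with a surplus vertex of a neighboring tile, use Hall's theorem for $G$ together with the symmetry of the tiling to show that this auxiliary graph again satisfies Hall's condition, and then --- and here the fractional relaxation is essential, since the auxiliary graph need not carry a Borel \emph{integral} matching any more than $G$ itself does --- put on it the canonical symmetric fractional weights dictated by the tiling and transfer these back to cross-tile weights on $G$, checking that each tile's interior polytope is thereby made nonempty. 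Verifying that the symmetric-tilability hypothesis genuinely yields such an auxiliary structure with a canonical, globally consistent fractional solution is the heart of the proof; once it is in place the rest follows the pattern of Theorem~\ref{thm:mainthm}, with the F{\o}lner-type tiling of the amenable quasi-transitive components and the cancellation along its boundary playing the role that the symmetry of the tiling plays here.
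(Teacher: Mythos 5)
Your high-level plan — tile the graph, fix weights on cross-tile edges, then fill in each tile's interior — matches the shape of the paper's argument, and Step B (take the barycenter of the polytope of admissible interior weights) would indeed give a canonical Borel choice if Step A were in hand. But Step A is where the proof lives, and you have not actually solved it: you sketch a ``surplus vertex'' auxiliary bipartite graph, but there is no argument that this auxiliary graph satisfies Hall's condition, no definition of the ``canonical symmetric fractional weights'' you would place on it, and no verification that the resulting boundary data makes each tile's interior polytope nonempty. You say as much yourself, and that is the genuine gap.

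The missing idea is that the tiles should be treated not as induced subgraphs $G[T]$ but as the graphs-with-half-edges $N_{1/2}(T) \cong F$, with one half-edge per cross-tile edge. This resolves your worry that Hall's condition does not restrict to a tile: for $S \subseteq T$, the $N_{1/2}(T)$-neighborhood of $S$ consists of the interior neighbors of $S$ together with a half-edge for every edge from $S$ leaving $T$, so $|N_{E(N_{1/2}(T))}(S)| \geq |N_{V(G)}(S)| \geq |S|$, and $F$ satisfies Hall's condition, hence admits a perfect matching. One then builds a fractional perfect matching $\tau$ of $F$ (averaging perfect matchings so that it is nonzero on the matching kernel), and averages $\tau$ over $\Aut(F)$ to make it automorphism-invariant. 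Now the hypothesis of half-edge transitivity does all the work you were trying to extract from your auxiliary graph: since $\Aut(F)$ acts transitively on the half-edges, the $\Aut(F)$-invariant $\tau$ takes a single constant value $c$ on every half-edge. Assign that constant $c$ to every cross-tile edge of $G$, and for interior edges of a tile $T$ pull back $\tau$ along a Borel-chosen isomorphism $i_T : N_{1/2}(T) \to F$ (invariance of $\tau$ means the choice of $i_T$ does not matter). Every vertex then sees exactly the edge-weights that its image under $i_T$ sees in $(F, \tau)$, so the degree condition holds, and there is nothing to coordinate across tile boundaries at all — Step A collapses to the observation about half-edge transitivity. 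Your auxiliary-graph route is not merely an alternative phrasing; it replaces a one-line consequence of the hypothesis with an unproved Hall-type claim about a graph the hypothesis says nothing about.
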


\section{Preliminaries and Notation}\label{sec:prelim}

If $X$ is a standard Borel space then $[X]^{<\N}$ is the standard Borel space of all finite subsets of $X$. If $(a_n)_{n\in\N}$ is a sequence of elements in a compact Hausdorff space, let $\LIM_{n} a_n$ be the unique ultralimit of the sequence with respect to a fixed nonprinciple ultrafilter on $\N$.\\

For a graph $G = (V,E)$ and $v\in V$, $h = (x,y)\in E$ let: 
\begin{align*}
    N_V(v) &= \{u\in V: u E v\} \subseteq V\\
    N_E(v) &= \{e\in E: e = (v,u) \text{for some } u\in V\} \subseteq E\\
    B_n(v) &= \{u\in V: d(u,v)\leq n\} \subseteq V\\
    B_n(h) &= B_n(x)\cup B_n(y) \subseteq V\\
\end{align*}
For any $v\in V$, let $N_E(v) = N_E(\{v\})$, $N_V(v) = N_V(\{v\})$, $B_n(v) = B_n(\{v\})$. \\

In this paper, we only consider locally finite graphs. A countable graph $G$ is called \textbf{quasi-transitive} if there are only finitely many orbits of the action $\Aut(G)\actson V(G)$.\\

A \textbf{rooted graph} $(G,r)$ is a graph with a chosen root vertex $x$. A \textbf{labeling} of a graph $G$ is a map $l:V(G) \to [0,1]$. A \textbf{rooted labeled graph} is a rooted graph along with a labeling, $(G,r,l)$.\\

Given a countable graph $G = (V,E)$, the \textbf{Bernoulli graph} of $G$ is the space of isomorphism classes of rooted labeled graphs $(G,r,l)$ such that $(G,r_0,l_0)$ and $(G,r_1,l_1)$ are adjacent if there exists a neighbor of $r_0$ which is isomorphic to $(G,r_1,l_1)$ as a labeled rooted graph. The Bernoulli graph of $G$ is a Borel graph, because isomorphism of locally rooted labeled graphs is a smooth equivalence relation. If $G$ is quasi-transitive, its Bernoulli graph is componentwise quasi-transitive. If $G$ is amenable, its Bernoulli graph is amenable.\\\\

The \textbf{vertex isoperimetric constant} of a graph $G$ is $$\iota_V(G) = \inf\{\frac{|N_V(S)|}{|S|}: S\text{ finite connected subset of }V(G)\}$$
A countable graph is \textbf{amenable} if it has $\iota_V(G) = 0$.\\

\begin{figure}[ht]
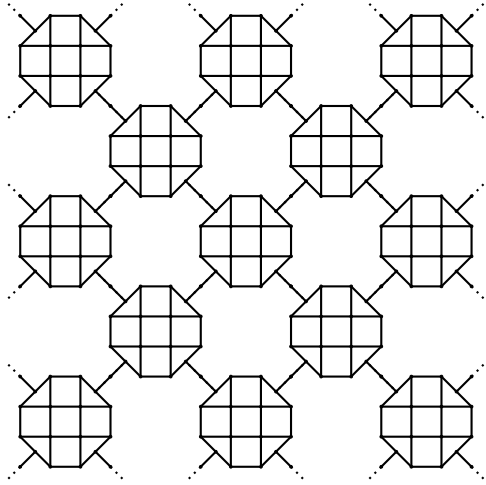

    \centering

    \caption{An amenable quasi-transitive graph.}
    \label{fig:amenable_quasi}
\end{figure}

A locally compact second countable group $\Gamma$ with Haar measure $\mu$ is \textbf{amenable} if for every compact set $A\subseteq \Gamma$ and every $\eps > 0$, there exists compact $B\subseteq \Gamma$ such that $\frac{\mu(AB\Delta B)}{\mu(B)} < \eps$. 

\begin{thm}[Salvatori, 1992]
 If $G$ is a quasi-transitive countable graph, then $G$ is amenable implies $\Aut(G)$ is amenable.
\end{thm}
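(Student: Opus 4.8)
The plan is to derive the amenability of $\Gamma := \Aut(G)$ from Kesten's spectral criterion for amenability, feeding the amenability of $G$ in through the norm of a symmetric random walk. Throughout, $\Gamma$ is viewed as a second countable, totally disconnected, locally compact group in the topology of pointwise convergence; since $G$ is locally finite each vertex stabilizer is a compact open subgroup, and I fix a basepoint $v_0$, write $K := \Stab_\Gamma(v_0)$, and normalize a left Haar measure $\mu$ on $\Gamma$ by $\mu(K) = 1$. I assume $G$ connected. I also use, without proof, the standard fact that an amenable quasi-transitive graph has unimodular automorphism group (a consequence of the mass-transport principle), so $\Gamma$ is unimodular.

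\medskip

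\emph{Reduction to a transitive action.} Since $G$ is connected and quasi-transitive, for a sufficiently large constant $D$ the graph $G_1$ on vertex set $O := \Gamma v_0$ in which $u \sim w \iff 0 < d_G(u,w) \le D$ is connected. Then $\Gamma$ acts on $G_1$ transitively (it preserves $d_G$ and $O = \Gamma v_0$), with the same compact open stabilizer $K$ of $v_0$; and $G_1$ has bounded degree (by uniform local finiteness of $G$) and is quasi-isometric to $G$, hence amenable, since amenability of bounded-degree graphs is a quasi-isometry invariant. Thus it suffices to prove: \emph{if a second countable, unimodular, totally disconnected, locally compact group $\Gamma$ acts transitively on a connected $d$-regular amenable graph $G$ with compact open vertex stabilizer $K = \Stab_\Gamma(v_0)$, then $\Gamma$ is amenable.} (One then applies this to $\Gamma = \Aut(G)$ acting on $G_1$; here $\Gamma$ need not be all of $\Aut(G_1)$, which is harmless.)

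\medskip

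\emph{The spectral input and the main computation.} Let $P\phi(v) = \tfrac1d\sum_{u\sim v}\phi(u)$ be the simple random walk operator on $\ell^2(V(G))$; it is self-adjoint with $\|P\| \le 1$, and if $(F_n)$ is a sequence of finite subsets with $|N_V(F_n)\setminus F_n|/|F_n| \to 0$ (which exists since $\iota_V(G) = 0$), then $\langle P\mathbf 1_{F_n},\mathbf 1_{F_n}\rangle \ge (1-o(1))\,\|\mathbf 1_{F_n}\|^2$, so $\|P\| = 1$. Identify the $\Gamma$-representation $\ell^2(V(G))$ with $\ell^2(\Gamma/K)$ via $gK \mapsto gv_0$; since $K$ is open this is an honest subrepresentation of the left regular representation $L^2(\Gamma)$ (the right-$K$-invariant functions). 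A direct computation (using $Kv_0 = \{v_0\}$ and $\mu(K) = 1$) identifies $P$, under this correspondence, with right convolution by $\psi := \tfrac1d\mathbf 1_S$, where $S := \{g \in \Gamma : gv_0 \sim v_0\}$ is a bi-$K$-invariant symmetric subset of $\Gamma$ with $\mu(S) = d$; so $\psi$ is a symmetric probability density whose support $S$ generates $\Gamma$ (because $\langle K, S\rangle$ already acts transitively on the connected graph $G$ and contains $K = \Stab_\Gamma(v_0)$, hence equals $\Gamma$). Since $\ell^2(\Gamma/K)$ is invariant under this right convolution in $L^2(\Gamma)$, and on $L^2(\Gamma)$ right convolution by $\psi$ is unitarily conjugate (via the flip $F(x) \mapsto F(x^{-1})$, using unimodularity) to $\lambda_\Gamma(\psi)$, we obtain
\[ 1 \;=\; \|P\| \;\le\; \|\lambda_\Gamma(\psi)\| \;\le\; \|\psi\|_{L^1(\Gamma)} \;=\; 1. \]
Hence $\|\lambda_\Gamma(\psi)\| = 1$, and by Kesten's criterion — a symmetric probability density on a second countable locally compact group whose support generates the group has regular-representation norm $1$ if and only if the group is amenable — it follows that $\Gamma$ is amenable, which is the reduced statement; so $\Aut(G)$ is amenable.

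\medskip

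The crux is the identification of the random walk operator with convolution by a single function $\psi$ on $\Gamma$: this is exactly what the reduction to a transitive action secures, since for a genuinely quasi-transitive action the walk moves between distinct orbits and is only a matrix of Hecke operators, not a single convolution. It is also instructive to see why a naive combinatorial route fails: pushing graph Følner sets $F_n$ to the cylinders $\{g : gv_0 \in F_n\} \subseteq \Gamma$ does not work, because an automorphism can fix $v_0$ while displacing far-away vertices of $F_n$ arbitrarily far, so $\{g : gv_0 \in gF_n \triangle F_n\}$ need not be a small fraction of the cylinder; routing everything through the norm of a symmetric random walk bypasses this displacement bookkeeping entirely. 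The remaining points requiring care are the quasi-isometry invariance of graph amenability in bounded degree, the unimodularity of $\Gamma$ (used, not reproved), the exact verification that $P$ corresponds to right convolution by $\tfrac1d\mathbf 1_S$ and that $\mu(S) = d$, and the precise statement of Kesten's criterion for locally compact groups.
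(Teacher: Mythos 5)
The paper does not prove this theorem; it is cited as a known result of Salvatori (1992), itself extending Soardi--Woess (1990) from the vertex-transitive case to the quasi-transitive case. So there is no internal proof to compare against, and I will evaluate your argument on its own terms.

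Your proof is correct and is, in spirit, the standard spectral-radius argument that underlies Soardi--Woess and Salvatori. The structure is sound: you explicitly cite unimodularity of $\Aut(G)$ as a separate input (this is indeed a genuinely independent mass-transport fact, so no circularity); you reduce to a transitive action on a bounded-degree graph $G_1$ on a single orbit by fattening the adjacency relation (and correctly observe that $\Aut(G)$ acting on $G_1$, possibly not faithfully and not as all of $\Aut(G_1)$, suffices); you verify that amenability of $G_1$ gives $\|P\| = 1$; the identification of $P$ with right convolution by $\psi = \tfrac1d\mathbf{1}_S$ on $\ell^2(\Gamma/K)\subseteq L^2(\Gamma)$ is correct (each neighbor of $v_0$ contributes a $K$-coset of Haar measure $1$, so $\mu(S)=d$, and bi-$K$-invariance of $S$ makes the right-$K$-invariant subspace invariant under right convolution); the flip trick using unimodularity correctly conjugates $R_\psi$ to $\lambda_\Gamma(\psi)$; and $S$ does generate $\Gamma$ (since $K\subseteq\langle S\rangle$ via $s^{-1}\cdot sk=k$, and $\langle S\rangle$ acts transitively on the connected $G_1$). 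The invocation of the Berg--Christensen/Derriennic--Guivarc'h form of Kesten's criterion for locally compact groups is appropriate since $\psi$ is absolutely continuous with generating support. Your closing remarks explaining why the naive ``push Følner sets up to $\Gamma$'' approach fails, and why the reduction to a single orbit is what makes the walk a single convolution rather than a matrix of Hecke operators, are exactly the right things to flag. If anything, you could shorten the reduction step by noting that, alternatively, one can work directly with Salvatori-style $\Gamma$-invariant nearest-neighbor transition operators on the quasi-transitive graph (a finite system of convolution operators over the finitely many orbits), but your one-orbit fattening is a clean and correct way to sidestep that bookkeeping.
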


A locally finite Borel graph $G$ on standard Borel space $X$ is \textbf{componentwise quasi-transitive} if each connected component is quasi-transitive. $G$ is \textbf{componentwise amenable} if each connected component is amenable.\\

A \textbf{fractional perfect matching} on $G$ is a symmetric function $f:E\to[0,1]$ satisfying $\sum_{e\in N^E(v)} f(e) = 1$ for all $v\in V$. If $G$ is a locally finite Borel graph, then $f$ is a Borel fractional perfect matching if it is a fractional matching that is a Borel function from $E$ to $[0,1]$.\\

A \textbf{ multigraph} is a tuple: $$\Mgraph = (\MgraphV,\MgraphE,\ep)$$ where $\MgraphV$ and $\MgraphE$ are sets representing the vertices and edges, and $\ep:\MgraphE\to [\MgraphV]^2\cup [\MgraphV]^1$ is the \textbf{endpoint map} which maps an edge $e$ to the set of its endpoints. Note that this definition allows a multigraph to have loops.
For a multigraph $\Mgraph = (\MgraphV,\MgraphE,\ep)$ and $v\in \MgraphV$ let: 
\begin{align*}
    N_\MgraphE(v) &= \{e\in \MgraphE: v\in \ep(e)\}\\
    N_\MgraphV(v) &= \bigcup(\ep(N_\MgraphE(v)))
\end{align*}
 Say that a Borel multigraph $\Mgraph$ has \textbf{countable multiplicity} if $\ep$ is countable to one.\\

An \textbf{induced sub-multigraph} of a multigraph $\Mgraph$ on a subset of vertices $V'\subseteq \MgraphV$ is the multigraph with vertices $V'$, edges $E' = s^{-1}([V']^2\cup[V']^1)$, and endpoint map $s\rest E'$.\\

A \textbf{homomorphism} from a multigraph $\Mgraph = (\MgraphV,\MgraphE,\ep_\Mgraph)$ to a multigraph $H = (V(H), E(H),\ep_H)$ is a pair of maps $\phi_{V}:\MgraphV \to V(H)$ and $\phi_{E}:\MgraphE \to E(H)$ that commute with the endpoint maps, so for any edge $e \in \MgraphE$, we have $\phi_V(\ep_M(e)) = \ep_H(\phi_E(e))$.\\

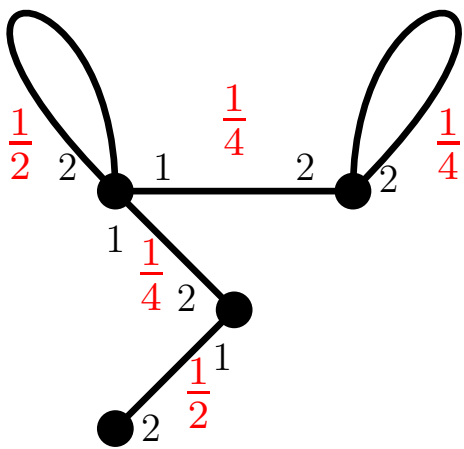
\begin{figure}[ht]
\label{fig:weighted_multigraph}
\resizebox{10cm}{!}{
\begin{tikzpicture}

\draw[black, thick] (0,0) .. controls (1,1) and (0,1) .. (0,0);
\node at (.15,.05)[scale = .4, black]{2};

\draw[black, thick] (-1,0) .. controls (-2,1) and (-1,1) .. (-1,0);
\node at (-1.2,.1)[scale = .4, black]{2};

\draw[black, thick]  (-.5,-.5) -- (-1,0);

\node at (-.2,.1)[scale = .4, black]{2};
\node at (-.8,.1)[scale = .4, black]{1};

\node at (-1,-.2)[scale = .4, black]{1};
\node at (-.7,-.45)[scale = .4, black]{2};

\node at (-.55,-.7)[scale = .4, black]{1};
\node at (-.85,-1)[scale = .4, black]{2};

\draw[black, thick] (0,0) -- (-1,0);
\draw[black, thick] (-1,-1) -- (-.5,-.5);

\filldraw[black] (0,0) circle (2pt);
\filldraw[black] (-1,0) circle (2pt);
\filldraw[black] (-.5,-.5) circle (2pt);
\filldraw[black] (-1,-1) circle (2pt);

\node at (-.5,.3)[scale = .6, red]{$\frac14$};
\node at (.4,.2)[scale = .6, red]{$\frac14$};
\node at (-1.4,.2)[scale = .6, red]{$\frac12$};
\node at (-.85,-.35)[scale = .6, red]{$\frac14$};
\node at (-.65,-.85)[scale = .6, red]{$\frac12$};

\end{tikzpicture}}
\centering
    \caption{A half-edge weighted multigraph component with a fractional perfect matching. The smaller black numbers represent the half-edge weighting, the red numbers are the fractional perfect matching.}
\label{fig:weighted_multigraph}
\end{figure}
An \textbf{half-edge weighting} of a multigraph $\Mgraph = (\MgraphV,\MgraphE,\ep)$ is a map $m:\MgraphV\times \MgraphE\to \N$ with $m(v,e) > 0$ iff $e\in N_E(v)$ (see Figure \ref{fig:weighted_multigraph}). A \textbf{fractional perfect matching} on a weighted multigraph $G$ with a edge weighting $m$ is a function $f:E\to [0,1]$ satisfying: $$\sum_{e\in N_E(v)} f(e)m(v,e) = 1$$ For all $v\in V$. A \textbf{weighted multigraph} $\Mgraph = (\MgraphV,\MgraphE,\ep, m)$ is a multigraph along with an edge weighting. Let the \textbf{weighted degree} of $v\in V(M)$ be $\wdeg_M(v) = \sum_{e\in N_E(v)} m(v,e)$. Note that any graph $G=(V,E)$ can be made into a weighted multigraph $G$ by taking $\ep(e)$ to be the set of endpoints of $e$, taking $m(v,e) = 1$ for every vertex $v$ and edge $e$ incident to $v$, and setting $\Mgraph = (V, E,\ep, m)$. In this case $\wdeg_M = \deg_G$.\\

A multigraph $\Mgraph = (\MgraphV,\MgraphE,\ep)$ is called a \textbf{Borel multigraph} if $\MgraphV$ and $\MgraphE$ are standard Borel spaces and $\ep$ is a Borel map. If $\Mgraph$ is additionally equipped with a Borel edge weighting $m:V\times E\to \N$, call it a \textbf{Borel weighted multigraph}. Note if $\Mgraph$ is a Borel multigraph, then $N_\MgraphE(v)$ is always a Borel set. If $\Mgraph$ has countable multiplicity then $N_\MgraphV(v)$ is also always Borel.\\


A \textbf{graph with half-edges} is a graph $G$ along with a set $H(G)$ and an assignment $t: H(G) \to V(M)$. Call elements of $H(G)$ half-edges. Say $v\in \MgraphV$ is incident to $h\in H$ if $t(h) = v$. Let $N_E(v) = \{e\in E(G)\cup H(G):\ e\text{ is incident to }x\}$.\\

For a graph $G$ and a set of vertices $S\subseteq V(G)$, the \textbf{induced subgraph with half-edges} is defined as the induced graph on $S$ along with a half-edge for each edge in the boundary of $S$, where the assignment $t$ sends an edge in the boundary to the vertex incident to it in $S$. We denote the induced subgraph with half-edges on $S$ by $N_{\frac{1}{2}}(S)$.\\

A \textbf{perfect matching} of a graph with half-edges $m:E(G)\cup H(G)\to\{0,1\}$ such that every $v\in V$ is incident to exactly one element of $m^{-1}({1})$.\\

We can similarly adapt the definition of fractional perfect matching to graphs with half-edges: a \textbf{fractional perfect matching} on a graph with half-edges $G$ is a function $f:E(G)\cup H(G)\to[0,1]$ symmetric on $E$ satisfying $\sum_{e\in N_E(v)} f(e) = 1$ for all $v\in V$. Note that if $G$ is a bipartite graph with half-edges, then if it satisfies Hall's condition, ie $|N_E(S)|\geq |S|$ for all finite $S\subseteq G$, then $G$ has a perfect matching.\\

For a graph $G$, let the \textbf{perfect matching kernel} $K\subseteq E(G)$ be the set of edges $e$ such that there is a perfect matching of $G$ containing $e$ and a perfect matching of $G$ not containing $e$. In particular, if $e$ is in the perfect kernel then there exists a fractional perfect matching of $G$ which is non-integer on $e$.\\

Let $F$ be a finite graph with half-edges. Say that a Borel graph $G$ on standard Borel space $X$ is \textbf{Borel $F$-tileable} if there exists a Borel $\CT\subseteq [X]^{<\N}$ consisting of disjoint $G$ connected components such $\bigcup \CT = X$ and for every $S\in \CT$ we have $N_\frac12(S)\cong F$.\\

A graph with half-edges $F$ is \textbf{half-edge transitive} if for each pair of half-edges $h,f \in H(F)$ there exists an automorphism on $F$ sending $h$ to $f$. A Borel graph is \textbf{Borel symmetrically tilable} if it is Borel $F$-tileable for a half-edge transitive $F$.

\begin{figure}[ht]
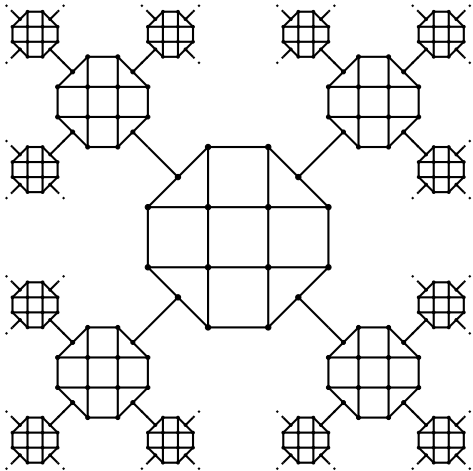

    \centering

    \caption{A non-amenable quasi-transitive graph. The Bernoulli graph of this graph is be symmetrically tileable.}
    \label{fig:nonamenable tileable}
\end{figure}

\section{Constructing the Type Multigraph}\label{sec:typegraph}
We will proceed by constructing a Borel multigraph called the \textbf{type multigraph}, denoted $\Mgraph$.\\

Let $\CF_\circ(n)$ denote the countable set of isomorphism classes of finite rooted graphs with radius $\leq n$. Define $\CF_\circ$ to be a closed subspace of $\prod_{n\in\N}\CF_\circ(n)$ such that $((F_0,r_0),(F_1,r_1)\dots)\in \CF_\circ$ if for all $n$ we have $G_{n+1}\rest B_{n}(r_{n+1}),r_{n+1}) \cong (G_n,r_n)$. To each element $((F_0,r_0),(F_1,r_1)\dots)$ of $\CF_\circ$ we can associate a unique (up to isomorphism) infinite connected locally finite rooted graph $(G,r)$ such that $(F_n,r_n) = (F\rest B_n(r), r)$. In a slight abuse of notation, we will denote elements of $\CF_\circ$ as $(F_0,F_1,\dots, r)$, where $r$ is the root of every $F_n$. We call the associated infinite rooted graph $(F,r)$ the \textbf{limit}  of $(F_0,F_1,\dots,r)$.\\

Similarly, define $\CF_\mathbf{-}(n)$ denote the countable set of isomorphism classes of finite graphs with a distinguished edge $h$ where all vertices are within distance $n$ of one of the endpoints of $h$. Define $\CF_\mathbf{-}$ to be a closed subspace of $\prod_{n\in\omega}\CF_\mathbf{-}(n)$ such that $((F_0,h_0),(F_1,h_1)\dots)\in \CF_\mathbf{-}$ if for every $n$, we have $(F_{n+1}\rest B_{n}(h_{n+1}),h_{n+1}) \cong (F_n,h_n)$. In the same way we did for $\CF_\circ$, we can associate to each element $((F_0,h_0),(F_1,h_1)\dots)$ of $\CF_\mathbf{-}$ a unique (up to isomorphism) \textbf{limit} $(F,h)$.\\
    
For each $e = (F_0,\dots, h)\in \CF_\mathbf{-}$ fix embeddings $f_n:(F_n,h)\to (F_{n+1},h)$. Note that $F_0$ is a single edge $h$ between endpoints $u_0$,$v_0$, then recursively set $u_{n+1}=f_n(u_n)$, $v_{n+1}=f_n(v_n)$. Then to each $e\in \CF_\mathbf{-}$ we can associate two elements of $\CF_\circ$: $u = ((F_n\rest B_n(u_n),u_n))_{n\in\N}$ and $v = ((F_n\rest B_n(v_n),v_n))_{n\in\N}$. This $ u$ and $ v$ do not depend on our specific choice of $f_n$. Define $\ep:\CF_\mathbf{-}\to [\CF_\circ]^2\cup[\CF_\circ]^1$ so that $\ep((F_0,\dots, h)) = \{u,v\}$.\\ 

Equip $\CF_\circ$ and $\CF_\mathbf{-}$ with the product topology. Then $\ep$ is Borel since the initial segments of $u$ and $v$ depend only on the initial segments of $e$.
    
\begin{definition} Define a Borel multigraph $\Mgraph = (\MgraphV,\MgraphE,\ep)$ where $\MgraphV = \CF_\circ$, $\MgraphE =\CF_\mathbf{-}$, and $\ep$ is defined as above.\\

Note that $M$ has countable multiplicity, as for any $v = (F_0,\dots,r) \in \MgraphV$, the size of $N_\MgraphE(v)$ is at most the degree of $r$ in the limit $(F,r)$ of $v$. For each $v = (F_0,\dots, r) \in \MgraphV$, $e = (H_0,\dots, h) \in \MgraphE$, and $i\in \N$, let $n_i(v, e)$ be the number of edges $f$ in $F_{i+1}$ adjacent to the root $r$ with $(F_{i+1}\rest B_{i}(f),f) \cong (H_i, h)$. Now define a Borel edge weighting $m$ of $\Mgraph$ as follows:
$$m(v, e) = \min_{i\in\N} n_i(v,e).$$
Note then that for $v = (F_0,\dots,r) \in \MgraphV$, the degree of $r$ in the limit of $v$ is exactly $\sum_{e\in N_\MgraphE(v)}m(v,e)$.\\

For any locally finite Borel graph $G = (V,E)$ define Borel maps $\phi_V: V\to \MgraphV$, $\phi_E: E\to\MgraphE$ such that for any $x\in V$ and $h\in E$:

    \begin{align*}
        \phi_V(x) &= (G\rest B_{0}(x), G\rest B_{1}(x),\dots, x)\\
        \phi_E(h) &= (G\rest B_{0}(h), G\rest B_{1}(h),\dots, h)
    \end{align*}
\end{definition}

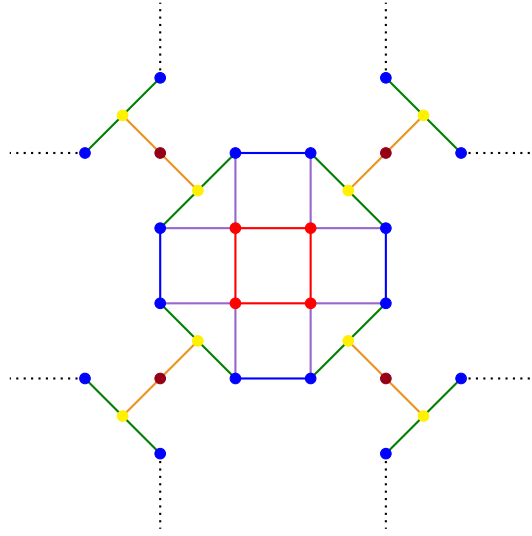
\begin{figure}[ht]
\begin{tikzpicture}

\draw[red, thick] (0,0) -- (0,1);
\draw[red, thick] (0,0) -- (1,0);
\draw[red, thick] (1,0) -- (1,1);
\draw[red, thick] (0,1) -- (1,1);

\draw[amethyst, thick] (1,1) -- (1,2);
\draw[amethyst, thick] (0,1) -- (0,2);
\draw[blue, thick] (1,2) -- (0,2);

\draw[amethyst, thick] (1,1) -- (2,1);
\draw[amethyst, thick] (1,0) -- (2,0);
\draw[blue, thick] (2,1) -- (2,0);

\draw[amethyst, thick] (0,0) -- (0,-1);
\draw[amethyst, thick] (1,0) -- (1,-1);
\draw[blue, thick] (0,-1) -- (1,-1);

\draw[amethyst, thick] (0,0) -- (-1,0);
\draw[amethyst, thick] (0,1) -- (-1,1);
\draw[blue, thick] (-1,0) -- (-1,1);

\draw[ao, thick] (-1,0) -- (0,-1);
\draw[ao, thick] (1,-1) -- (2,0);
\draw[ao, thick] (-1,1) -- (0,2);
\draw[ao, thick] (1,2) -- (2,1);

\draw[carrotorange, thick] (-.5,-.5) -- (-1.5,-1.5);
\draw[carrotorange, thick] (-.5,1.5) -- (-1.5,2.5);
\draw[carrotorange, thick] (1.5,1.5) -- (2.5,2.5);
\draw[carrotorange, thick] (1.5,-.5) -- (2.5,-1.5);

\filldraw[yellow] (-.5,-.5) circle (2pt);
\filldraw[yellow] (-.5,1.5) circle (2pt);
\filldraw[yellow] (1.5,1.5) circle (2pt);
\filldraw[yellow] (1.5,-.5) circle (2pt);

\filldraw[red] (0,0) circle (2pt);
\filldraw[red] (1,0) circle (2pt);
\filldraw[red] (0,1) circle (2pt);
\filldraw[red] (1,1) circle (2pt);

\filldraw[blue] (-1,0) circle (2pt);
\filldraw[blue] (-1,1) circle (2pt);
\filldraw[blue] (2,0) circle (2pt);
\filldraw[blue] (2,1) circle (2pt);
\filldraw[blue] (0,2) circle (2pt);
\filldraw[blue] (1,2) circle (2pt);
\filldraw[blue] (0,-1) circle (2pt);
\filldraw[blue] (1,-1) circle (2pt);

\filldraw[carmine]  (-1,-1) circle (2pt);
\filldraw[carmine] (-1,2) circle (2pt);
\filldraw[carmine] (2,2) circle (2pt);
\filldraw[carmine] (2,-1) circle (2pt);

\draw[ao, thick] (-2,-1) -- (-1,-2);

\draw[black, thick,dotted] (-2,-1) -- (-3,-1);
\draw[black, thick,dotted] (-1,-2) -- (-1,-3);

\draw[ao, thick] (2,-2) -- (3,-1);
\draw[black, thick,dotted] (2,-2) -- (2,-3);
\draw[black, thick,dotted] (3,-1) -- (4,-1);

\draw[ao, thick] (-2,2) -- (-1,3);

\draw[black, thick,dotted] (-2,2)-- (-3,2);
\draw[black, thick,dotted] (-1,3)-- (-1,4);

\draw[ao, thick] (2,3) -- (3,2);
\draw[black, thick,dotted] (2,3)-- (2,4);
\draw[black, thick,dotted] (3,2)-- (4,2);

\filldraw[blue]  (2,3) circle (2pt);
\filldraw[blue]   (3,2) circle (2pt);
\filldraw[blue] (-2,2) circle (2pt);
\filldraw[blue] (-1,3) circle (2pt);

\filldraw[blue] (2,-2) circle (2pt);
\filldraw[blue] (3,-1) circle (2pt);

\filldraw[blue] (-2,-1) circle (2pt);
\filldraw[blue] (-1,-2) circle (2pt);

\filldraw[yellow]  (-1.5,-1.5) circle (2pt);
\filldraw[yellow] (-1.5,2.5) circle (2pt);
\filldraw[yellow] (2.5,2.5) circle (2pt);
\filldraw[yellow] (2.5,-1.5) circle (2pt);

\end{tikzpicture}
\centering
    \caption{The quasi-transitive graph from Figure \ref{fig:amenable_quasi} with edge and vertex automorphism group orbits colored.}
\label{fig:colorquasi}
\end{figure}


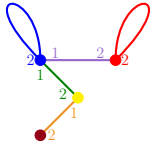
\begin{figure}[ht]
\begin{tikzpicture}

\draw[red, thick] (0,0) .. controls (1,1) and (0,1) .. (0,0)node[anchor=west, scale = .5]{2};
\draw[blue, thick] (-1,0) .. controls (-2,1) and (-1,1) .. (-1,0)node[anchor=east, scale = .5]{2};

\draw[ao, thick]  (-.5,-.5) -- (-1,0);

\node at (-.2,.1)[scale = .5, amethyst]{2};
\node at (-.8,.1)[scale = .5, amethyst]{1};

\node at (-1,-.2)[scale = .5, ao]{1};
\node at (-.7,-.45)[scale = .5, ao]{2};

\node at (-.55,-.7)[scale = .5, carrotorange]{1};
\node at (-.85,-1)[scale = .5, carrotorange]{2};

\draw[amethyst, thick] (0,0) -- (-1,0);
\draw[carrotorange, thick] (-1,-1) -- (-.5,-.5);

\filldraw[red] (0,0) circle (2pt);
\filldraw[blue] (-1,0) circle (2pt);
\filldraw[yellow] (-.5,-.5) circle (2pt);
\filldraw[carmine] (-1,-1) circle (2pt);

\end{tikzpicture}

\caption{The image of the graph in Figure \ref{fig:colorquasi} under the homomorphism $(\phi_V,\phi_E)$.}
\label{fig:colortypegraph}
\end{figure}
In the following claims, let $G$ be a locally finite graph $G = (V,E)$. We will first show that $(\phi_V, \phi_E)$ is a homomorphism from $G$ (viewed as a multigraph) to $\Mgraph$. Moreover, the image of this homomorphism is an induced sub-multigraph of $\Mgraph$ whose connected components are connected components of $\Mgraph$.

\begin{lem}\label{hom_lemma}
    Let $G =  (V,E)$ be a locally finite Borel graph. For any $x,y\in V$, if $x\ E\ y$ then $\ep(\phi_E(x,y)) = \{\phi_V(x),\phi_V(y)\}$, and $\phi_E(N_E(x)) = N_\MgraphE(\phi_V(x))$.
\end{lem}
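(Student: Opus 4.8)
The plan is to unwind the definitions of $\phi_V$, $\phi_E$, and of the endpoint map $\ep$ on $\MgraphE = \CF_\mathbf{-}$. The equation $\ep(\phi_E(x,y)) = \{\phi_V(x),\phi_V(y)\}$ and the inclusion $\phi_E(N_E(x)) \subseteq N_\MgraphE(\phi_V(x))$ should fall out by definition-chasing; the real content is the reverse inclusion $N_\MgraphE(\phi_V(x)) \subseteq \phi_E(N_E(x))$, which I would establish by a compactness argument over the finitely many neighbours of $x$. The technical lubricant used throughout is the elementary fact that a geodesic of $G$ of length $\le n$ between two vertices each at distance $\le 1$ of a common vertex $z$ lies inside $B_{n+1}(z)$; writing $B_n(x,y):=B_n(x)\cup B_n(y)$, this gives the following restriction identities, for $y \sim x$ and every $n$: $(G\rest B_{n+1}(x))\rest B_n(x,y) = G\rest B_n(x,y)$ (with matching distinguished edge) and $(G\rest B_n(x,y))\rest B_n(x) = G\rest B_n(x)$, together with the analogous identities inside each finite graph $F_n$ around the endpoints of $h$.

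For the first assertion, write $e = \phi_E(x,y)$, whose $n$-th coordinate is $(G\rest B_n(x,y),(x,y))$. Since the pair $\{u,v\} = \ep(e) \subseteq \CF_\circ$ does not depend on the choice of the embeddings $f_n$, I would take each $f_n$ to be the inclusion $B_n(x,y)\hookrightarrow B_{n+1}(x,y)$; then $u_n = x$ and $v_n = y$ for all $n$, so by the restriction identities $u = \big((G\rest B_n(x,y))\rest B_n(x),x\big)_n = \big(G\rest B_n(x),x\big)_n = \phi_V(x)$ and likewise $v = \phi_V(y)$. Hence $\ep(\phi_E(x,y)) = \{\phi_V(x),\phi_V(y)\}$; since $\phi_V(x) \in \ep(\phi_E(x,y))$ for every edge incident to $x$, this gives $\phi_E(N_E(x)) \subseteq N_\MgraphE(\phi_V(x))$.

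For the reverse inclusion, fix $e = (F_0,F_1,\dots,h) \in \MgraphE$ with $\phi_V(x) \in \ep(e)$, say $\phi_V(x) = u = \big(F_n\rest B_n(u_n),u_n\big)_n$, where $u_n$ and $v_n$ are the endpoints of $h$ in $F_n$; thus $(G\rest B_n(x),x) \cong (F_n\rest B_n(u_n),u_n)$ for all $n$. First, for a neighbour $y$ of $x$ set $S_y = \{\, n \in \N : (G\rest B_n(x,y),(x,y)) \cong (F_n,h) \text{ as edge-rooted graphs}\,\}$; using the coherence condition $F_{n+1}\rest B_n(h)\cong F_n$ defining $\CF_\mathbf{-}$ together with the restriction identities, one checks that each $S_y$ is an initial segment of $\N$. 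Next, fix $n$ and an isomorphism $\psi\colon (G\rest B_{n+1}(x),x)\to(F_{n+1}\rest B_{n+1}(u_{n+1}),u_{n+1})$ (which exists since $(G\rest B_{n+1}(x),x) \cong (F_{n+1}\rest B_{n+1}(u_{n+1}),u_{n+1})$); as $v_{n+1}\sim u_{n+1}$, the vertex $y := \psi^{-1}(v_{n+1})$ is a neighbour of $x$, and restricting $\psi$ to the $n$-ball of the edge $(x,y)$ and then applying $F_{n+1}\rest B_n(h)\cong F_n$ shows $n \in S_y$. Finally, therefore $\bigcup_{y\sim x} S_y = \N$, and since $x$ has only finitely many neighbours and each $S_y$ is an initial segment of $\N$, some neighbour $y^\ast$ has $S_{y^\ast} = \N$, that is, $\phi_E(x,y^\ast) = e$; hence $e \in \phi_E(N_E(x))$.

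The step I expect to be the main obstacle is this last compactness move: upgrading ``for every level $n$ some neighbour of $x$ is correct up to level $n$'' to ``some single neighbour is correct at every level'', which is exactly where local finiteness of $G$ enters, through the $S_y$ being finitely many nested initial segments of $\N$. The accompanying nuisance is the repeated bookkeeping that restricting a rooted ball-isomorphism to the ball around an incident edge is again an isomorphism of edge-rooted graphs, for which one needs the ``geodesic stays local'' fact and the identification of $B_n$-neighbourhoods computed in an induced subgraph with those computed in the ambient graph. A slicker but less self-contained alternative for the reverse inclusion is to pass to the limit graph $(F,h)$ of $e$: the hypothesis says $\phi_V(x)$ is the ball-tower of $(F,u_\infty)$, so uniqueness of limits yields a rooted isomorphism $(\mathrm{comp}_G(x),x)\cong(F,u_\infty)$, and the preimage of the other endpoint $v_\infty$ of $h$ under it is the desired neighbour $y$ of $x$ with $\phi_E(x,y)=e$.
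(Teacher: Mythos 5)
Your proof is correct and follows essentially the same route as the paper: take the embeddings $f_n$ to be inclusions to get the first assertion and the forward inclusion, then for the reverse inclusion use local finiteness to pigeonhole over the neighbours of $x$ to upgrade ``for each $n$ some neighbour works up to level $n$'' to ``one neighbour works at every level.'' Your treatment is somewhat more explicit than the paper's (which compresses the initial-segment observation into a terse ``in which case $(G\rest B_n(g),g)\cong(H_n,h)$ for all $n$''), but the underlying argument is identical.
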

\begin{proof}

    First, we show that $\phi_V$, $\phi_E$ form a multigraph homomorphism from $G$ viewed as a multigraph to $\Mgraph$. Let $x,y \in V$ with $x\ E\ y$ via an edge $h \in E$. We need to show that $\phi_E(h)$ is an edge from $\phi_V(x)$ to $\phi_V(y)$ in $M$. Recall that $\phi_E(h) = (G\rest B_{0}(h), G\rest B_{1}(h),\dots, h)$. In the definition of  $\ep(\phi_E(h))$ we may take the $f_n:(G\rest B_{n}(h),h)\to (G\rest B_{n+1}(h),h)$ to be the inclusion map. We then have:
    \begin{align*}
        \ep(\phi_E(h)) &= \{(G\rest B_{0}(x), \dots, x), (G\rest B_{0}(y),\dots, y)\}\\
        &= \{\phi_V(x),\phi_V(y)\}
    \end{align*}

    We will now show $\phi_E(N_E(x)) = N_\MgraphE(\phi_V(x))$. By the above we have $\phi_E(N_E(x)) \subseteq N_\MgraphE(\phi_V(x))$, so it suffices to show $\phi_E(N_E(x)) \subseteq N_\MgraphE(\phi_V(x))$. Take any $e = (H_0,H_1,\dots h) \in N_\MgraphE(\phi_V(x))$, then there exists embeddings from $H_n$ to $G$ sending $h$ to an edge incident to $x$. Since $G$ is locally finite, there is some edge $g$ incident to $x$ such that $h$ is sent to $g$ infinitely many times, in which case $(G\rest B_{n}(g),g)\cong (H_n, h)$ for all $n$, so $\phi_E(g) = e$.\\
\end{proof}

The map $\phi_V$ collapses all vertices in the same $\Aut(G)$ orbit. The same goes for edges.

\begin{lem}\label{lem:aut_lemma}
Let $G =(V,E)$ be a locally finite Borel graph.
\begin{itemize}
    \item If $x,y\in V$ with $x\ E\ y$, then $\phi_V(x) = \phi_V(y)$ if and only if there is an automorphism of $G$ taking $u$ to $v$.
    \item If $f,h\in E$ are in the same $G$ component, then $\phi_E(f) = \phi_E(h)$ if and only if there is an automorphism of $G$ taking $f$ to $h$.
\end{itemize}
\end{lem}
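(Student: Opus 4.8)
The plan is to reduce both statements to a single compactness (König's lemma) argument. First I would unpack the definitions: $\phi_V(x) = \phi_V(y)$ says precisely that $(G\rest B_n(x), x) \cong (G\rest B_n(y), y)$ as rooted graphs for every $n \in \N$, and $\phi_E(f) = \phi_E(h)$ says $(G\rest B_n(f), f) \cong (G\rest B_n(h), h)$ as graphs with a distinguished edge for every $n$. The ``only if'' directions are then immediate: an automorphism $\alpha$ of $G$ with $\alpha(x) = y$ (resp. $\alpha(f) = h$) preserves the graph metric, hence carries $B_n(x)$ bijectively onto $B_n(y)$ and restricts to the required rooted isomorphisms, and likewise for edges.

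For the ``if'' directions I would build a tree $T$ whose level-$n$ nodes are the isomorphisms of rooted graphs $\sigma \from (G\rest B_n(x), x) \to (G\rest B_n(y), y)$ (resp. the isomorphisms respecting the distinguished edge), ordered by restriction, so the parent of $\sigma$ at level $n+1$ is $\sigma \rest B_n(x)$. The first point to check is that this restriction really lands in level $n$: such a $\sigma$ must map $B_n(x)$ onto $B_n(y)$ and restrict to a genuine isomorphism of the induced subgraphs. This follows from the elementary observation that for $v$ with $d_G(v,x)\le n$ a geodesic from $v$ to $x$ stays inside $B_n(x)\subseteq B_{n+1}(x)$, so distances to the root computed inside $G\rest B_{n+1}(x)$ agree with distances in $G$ up to radius $n$; since $\sigma$ preserves distances inside the induced subgraph, it sends $B_n(x)$ into $B_n(y)$, and applying the same to $\sigma^{-1}$ gives ``onto''. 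Adjacency being a local condition then makes $\sigma\rest B_n(x)$ an isomorphism of the subgraphs induced on $B_n(x)$ and $B_n(y)$; the same argument applied around both endpoints of the distinguished edge handles the edge case.

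Now $T$ is finitely branching since each ball $B_n(\cdot)$ is finite by local finiteness, and every level of $T$ is nonempty: the hypothesis $\phi_V(x)=\phi_V(y)$ supplies an isomorphism $(G\rest B_m(x),x)\cong(G\rest B_m(y),y)$ for every $m$, and restricting it to radius $n$ (legitimate by the previous paragraph) produces a level-$n$ node. By König's lemma $T$ has an infinite branch $(\sigma_n)_n$ with $\sigma_{n+1}\rest B_n(x) = \sigma_n$, so $\sigma := \bigcup_n \sigma_n$ is a well-defined graph isomorphism from $\bigcup_n B_n(x)$, the connected component of $x$, onto the connected component of $y$. Since $x\ E\ y$ (resp. $f$ and $h$ lie in the same component) these two components coincide, so $\sigma$ is an automorphism of that component; extending it by the identity on every other component gives an automorphism of $G$ carrying $x$ to $y$ (resp. $f$ to $h$).

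I expect the only real obstacle to be the bookkeeping in the second paragraph — making precise that an isomorphism of the pointed (or edge-pointed) ball of radius $n+1$ restricts to one of radius $n$, so that $T$ is well-defined and has nonempty levels. Everything else (local finiteness giving finite branching, König's lemma, and extending a component automorphism by the identity) is routine.
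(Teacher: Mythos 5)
Your proof is correct and takes essentially the same approach as the paper: the paper's proof is simply a one-line appeal to "compactness" to produce an automorphism of the component from the sequence of isomorphisms between balls $(G\rest B_n(x),x)\cong(G\rest B_n(y),y)$, followed by extending it by the identity off that component. You have merely unpacked that compactness step into an explicit finitely-branching tree of restrictions and an invocation of K\"onig's lemma (and supplied the routine ``only if'' direction that the paper omits as obvious); the substance is identical.
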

\begin{proof}
    If $\phi_V(x) = \phi_V(y)$, then $(G\rest B_{n}(x),x)\cong (G\rest B_{n}(y),y)$ for all $n$. So by compactness, there is an automorphism $f$ of the connected component of $x$ in $G$ sending $x$ to $y$. We can extend this to all of $G$ by letting the automorphism be the identity off the connected component of $x$. The same argument works for edges.
\end{proof}

The map $\phi_E$ may collapse multiple edges incident to $x\in V$ in $G$ to a single edge incident to $\phi_V(x)$ in $\Mgraph$. However, in the next lemma, we will see that the weight of the edge $f \in \MgraphE$ with respect to $\phi_V(x)$ is exactly the number of edges adjacent to $x$ which get collapsed to $f$ by $\phi_E$. Taking this weighting into account, $(\phi_V,\phi_E)$ preserves the weighted degree of each $x\in V$.

\begin{lem}\label{lem:deg_lemma}
Let $G = (V,E)$ be a locally finite Borel graph. Then for any $x\in V(G)$ we have $\deg_G(x) = \wdeg_M(\phi_V(x))$. Moreover for all $x\in V$, $e\in \MgraphE$, we have $m(\phi_V(x),e) = |\{g\in N_E(x): \phi_E(g) = e\}|$.
\end{lem}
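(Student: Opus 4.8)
The plan is to establish the second (pointwise, weight-level) identity first and then obtain the first by summing. Fix $x\in V$, write $v=\phi_V(x)=(F_0,F_1,\dots,x)$ with $F_n=G\rest B_n(x)$, and fix an arbitrary $e=(H_0,H_1,\dots,h)\in\MgraphE$. The first step is a bookkeeping observation: the edges of $F_{i+1}=G\rest B_{i+1}(x)$ incident to the root are exactly the edges of $N_E(x)$, and for such an edge $g$ the radius-$i$ neighbourhood of $g$ computed inside $F_{i+1}$ coincides with $G\rest B_i(g)$. The reason is that a geodesic of length $\le i$ from a vertex to an endpoint of $g$ stays inside $B_i(g)\subseteq B_{i+1}(x)$, so distances to $g$ are computed the same way in $F_{i+1}$ as in $G$. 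Hence $n_i(v,e)=|S_i|$, where $S_i:=\{g\in N_E(x): (G\rest B_i(g),g)\cong (H_i,h)\}$.

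The heart of the argument is to show that the sequence $(S_i)_{i\in\N}$ is decreasing, i.e.\ $S_{i+1}\subseteq S_i$. Given $g\in S_{i+1}$ and an isomorphism $\psi$ of graphs-with-distinguished-edge from $(G\rest B_{i+1}(g),g)$ onto $(H_{i+1},h)$, one argues that $\psi$ carries the radius-$i$ neighbourhood of $g$ inside $G\rest B_{i+1}(g)$ onto the radius-$i$ neighbourhood of $h$ inside $H_{i+1}$ (an isomorphism preserves distances to the distinguished edge, and by the same geodesic remark these distances agree with the relevant ball structure in both finite graphs). By the first step the domain of this restriction is $G\rest B_i(g)$ and its range is $H_{i+1}\rest B_i(h)$, which by the compatibility condition defining $\CF_\mathbf{-}$ is isomorphic to $(H_i,h)$; hence $g\in S_i$. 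I expect this monotonicity step to be the main obstacle: everything else is formal, and the care needed is precisely the interplay between distances in $G$, in the truncations $G\rest B_{i+1}(x)$, and in the abstract finite graphs $H_i$.

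With monotonicity in hand the rest is routine. Since $G$ is locally finite, $N_E(x)$ is finite, so the decreasing sequence of finite sets $(S_i)$ stabilises: there is $N$ with $S_i=S_N$ for all $i\ge N$, and so $\bigcap_{i\in\N}S_i=S_N$. Directly from the definition of $\phi_E$ and of equality in $\CF_\mathbf{-}\subseteq\prod_{n\in\N}\CF_\mathbf{-}(n)$, for $g\in N_E(x)$ we have $\phi_E(g)=e$ iff $(G\rest B_i(g),g)\cong(H_i,h)$ for every $i$, i.e.\ iff $g\in\bigcap_i S_i$. Therefore $m(v,e)=\min_{i\in\N}n_i(v,e)=\min_{i\in\N}|S_i|=|S_N|=|\{g\in N_E(x):\phi_E(g)=e\}|$, which is the second assertion (including the case $e\notin N_\MgraphE(\phi_V(x))$, where both sides vanish). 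For the first assertion, Lemma \ref{hom_lemma} gives $N_\MgraphE(\phi_V(x))=\phi_E(N_E(x))$, and the sets $\{g\in N_E(x):\phi_E(g)=e\}$ for $e$ ranging over $\phi_E(N_E(x))$ partition $N_E(x)$; summing the second assertion over these fibres yields $\wdeg_M(\phi_V(x))=\sum_{e\in N_\MgraphE(\phi_V(x))}m(\phi_V(x),e)=|N_E(x)|=\deg_G(x)$.
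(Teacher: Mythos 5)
Your proof is correct and follows essentially the same route as the paper's: both establish the pointwise identity $m(\phi_V(x),e) = |\{g\in N_E(x):\phi_E(g)=e\}|$ first and obtain the degree identity by summing over the fibres of $\phi_E$ on $N_E(x)$. The one genuine service you render is making explicit the monotonicity $S_{i+1}\subseteq S_i$ (via the geodesic argument showing distances to $g$ agree in $G$, in $G\rest B_{i+1}(x)$, and in the truncations $H_i$), which the paper's proof uses silently when it passes from ``$(G\rest B_n(g),g)\cong(H_n,h)$ for infinitely many $n$'' to ``$\phi_E(g)=e$''.
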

\begin{proof}
It is enough to show that $m(\phi_V(x),e) = |\{g\in N_E(x): \phi_E(g) = e\}|$.
If $e = (H_0,H_1,\dots, h)$, then by definition $m(\phi_V(x),e)$ is the minimum over $n$ of the number of edges $g$ in $G \rest B_{n+1}(x)$ adjacent to $x$ with $((G\rest B_{n+1}(x))\rest B_n(g),g)= (G\rest B_n(g),g)\cong (H_n,h)$.\\

Enumerate $\{ g\in N_E(x): \phi_E(g) = e\}$ as $g_1,\dots,g_k$. We then have $(G\rest B_n(g_i),g_i)\cong (H_n,h)$ for all $n$, so $m(e)\geq k$. If for each $n$ there exists $g\in N_E(x)$ with $g \notin \{g_1,\dots,g_k\}$ such that $(G\rest B_n(g),g)\cong (H_n,h)$ for all $n$, then since $G$ is locally finite there exists a $g\in N_E(x)\setminus \{g_1,\dots,g_i\}$ for which $(G\rest B_n(g),g)\cong (H_n,h)$ for infinitely many $n$, in which case $\phi_E(g) = e$, contradicting our choice of $g_i$.
.
\end{proof}

\section{Automorphism Invariant Fractional Perfect Matchings in Amenable quasi-transitive Graphs}\label{qtgraphs}
In this section we will study automorphism invariant fractional perfect matchings in the classical (non-Borel) context. An \textbf{automorphism invariant fractional perfect matching} on $G$ is a fractional perfect matching $f$ on $G$ such that for any $\phi\in \Aut(G)$ and any $e\in E(G)$, we have $f(e) = f(\phi(e))$. In the following proposition, we will produce an automorphism invariant fractional perfect matching on a locally finite amenable graph from a fractional perfect matching. This can be thought of as integrating the fractional matching with respect to the mean on the automorphism group. We present a more detailed proof using our definition of amenability.

\begin{prop}\label{invar}
    Let $G$ be a locally finite amenable quasi-transitive graph. If there exists a fractional perfect matching on $G$, then there exists an automorphism invariant fractional perfect matching on $G$.
\end{prop}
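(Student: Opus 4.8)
The plan is to average the given fractional perfect matching over the automorphism group $\Aut(G)$ using amenability, but since $\Aut(G)$ need not be discrete (it is a locally compact second countable group acting properly on $G$), we should work with a F\o{}lner sequence in $\Aut(G)$ — or, more elementarily, with a F\o{}lner sequence of finite vertex sets in $G$. I would take the latter route since the proposition is phrased purely in terms of $G$. Fix a fractional perfect matching $f_0 \colon E(G) \to [0,1]$. For each finite connected $S \subseteq V(G)$ and each edge $e \in E(G)$, consider the average
\[
    f_S(e) = \frac{1}{|\{\phi(e) : \phi \in \Aut(G)\}|}\ \text{-- no, rather --}\quad f_S(e) = \frac{1}{|S_e|}\sum_{\psi} f_0(\psi(e)),
\]
where the sum ranges appropriately over automorphisms moving a fixed representative edge into a window determined by $S$; the key point is that $[0,1]$ is compact, so along a nonprincipal ultrafilter we may take $f(e) = \LIM_S f_S(e)$ for a F\o{}lner sequence $(S_n)$ witnessing $\iota_V(G) = 0$.

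The key steps, in order: (1) Reduce to finitely many orbits: by quasi-transitivity pick orbit representatives $v_1,\dots,v_k \in V(G)$ and $e_1,\dots,e_m \in E(G)$; it suffices to define $f$ on the $e_j$ and extend by $f(\phi(e_j)) = f(e_j)$, provided this is well-defined, i.e. $f(e_j) = f(\phi(e_j))$ whenever $\phi(e_j) = e_{j'}$, which automorphism-invariance forces and which the averaging construction must be shown to respect. (2) For a F\o{}lner sequence $S_n$, define $f_n(e) = \frac{1}{N_n(e)}\sum_{\substack{\phi \in \Aut(G)/\Stab \\ \phi(e)\subseteq S_n}} f_0(\phi(e))$ where $N_n(e)$ counts the relevant automorphism cosets landing $e$ inside $S_n$; here one uses that $\Aut(G)$ acts with finite vertex stabilizers (local finiteness + the fact that an automorphism fixing a ball pointwise... actually stabilizers of a vertex may be infinite, so one must quotient by the stabilizer of $e$ and count cosets — this is where care is needed). (3) Check $f_n$ is \emph{almost} a fractional perfect matching: for $v \in V(G)$ deep inside $S_n$, the constraint $\sum_{e \in N_E(v)} f_n(e) = 1$ holds exactly because averaging a collection of valid matchings (translates of $f_0$) over a window still satisfies each interior vertex constraint; only vertices within distance $2$ of the boundary $\partial S_n$ can fail, and these are a $\iota_V(G) + o(1)$ fraction. (4) Take $f(e) = \LIM_n f_n(e)$. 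Each vertex constraint is eventually an interior constraint for $S_n$ large (since each $v$ lies in the interior of all but finitely many $S_n$ in a F\o{}lner exhaustion), so $\sum_{e \in N_E(v)} f(e) = 1$ exactly. (5) Automorphism-invariance: given $\psi \in \Aut(G)$, the sets $\psi(S_n)$ are also a F\o{}lner sequence, and $f_n(\psi(e))$ computed with $S_n$ equals $f_n(e)$ computed with $\psi^{-1}(S_n)$; since the ultralimit is insensitive to which F\o{}lner sequence is used — or more carefully, since $|S_n \symdiff \psi(S_n)|/|S_n| \to 0$, which forces $f_n(\psi(e)) - f_n(e) \to 0$ — we get $f(\psi(e)) = f(e)$.

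The main obstacle, as flagged, is step (2)–(5) bookkeeping around \emph{infinite vertex stabilizers}: in a general quasi-transitive graph $\Aut(G)$ is not discrete, so one cannot naively "sum over all $\phi$ with $\phi(e) \subseteq S_n$" — that sum is infinite. The fix is either (a) to sum over a transversal of $\Aut(G)/\Stab(e)$, noting that only finitely many cosets move $e$ into the finite set $S_n$ (because $G$ is locally finite and the image $\phi(e)$ determines the coset $\phi\,\Stab(e)$), or (b) to invoke Salvatori's theorem (quoted in the excerpt) that $\Aut(G)$ is amenable, fix a F\o{}lner sequence $B_n \subseteq \Aut(G)$ of Haar-measurable sets, and set $f_n(e) = \frac{1}{\mu(B_n)}\int_{B_n} f_0(\phi^{-1}(e))\,d\mu(\phi)$, then $f = \LIM_n f_n$; the F\o{}lner property $\mu(\psi B_n \symdiff B_n)/\mu(B_n) \to 0$ gives automorphism-invariance, and the vertex constraints pass to the limit because each $f_n$ is an \emph{exact} convex combination (w.r.t. the probability measure $\mu/\mu(B_n)$ restricted to $B_n$) of fractional perfect matchings $\phi \cdot f_0$, hence is itself a fractional perfect matching — so no boundary correction is even needed. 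Approach (b) is cleaner and is presumably what the author intends given the prominent placement of Salvatori's theorem; I would adopt it, with the only nontrivial check being Borel/measurability of $\phi \mapsto f_0(\phi^{-1}(e))$ (immediate, it is locally constant) and finiteness of the integral (clear, as $f_0$ is bounded and $\mu(B_n) < \infty$).
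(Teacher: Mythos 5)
Your approach (b)---integrating the diagonal action of $\Aut(G)$ on $f_0$ against Haar measure over a F\o{}lner sequence $B_n\subseteq\Aut(G)$ furnished by Salvatori's theorem, then taking an ultralimit, with the vertex constraints passing exactly because each stage is a genuine average of fractional perfect matchings---is precisely the paper's proof. The one technical point the paper is more careful about, and which you gloss over when you say ``fix a F\o{}lner sequence,'' is arranging the $B_n$ to be simultaneously increasing, exhausting of $\Aut(G)$, and F\o{}lner (the paper's Claim does this), so that any fixed $\eta\in\Aut(G)$ eventually lies in $B_m$ when one bounds $|f'(e)-\eta\cdot f'(e)|$.
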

\begin{propositionproof}
    Let $f$ be a fractional perfect matching on $G$. Let $\mu$ be the Haar measure on $\Aut(G)$. Take $C_n$ to be an increasing sequence of compact subsets of $\Aut(G)$ such that $\bigcup C_n = \Aut(G)$ and the identity function $\Id:G\to G$ is contained in $C_0$. 

    \begin{numclaim} Take $\eps_n > 0$ be any positive sequence decreasing to $0$. Then there exists an increasing sequence of compact sets $B_n$ with $C_n\subseteq B_n$ and $\frac{\mu(B_n B_{n+1}\Delta B_{n+1})}{\mu(B_{n+1})} <\eps_n$.
    \end{numclaim}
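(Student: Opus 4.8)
The plan is to build the $B_n$ by recursion, feeding the amenability of $\Gamma:=\Aut(G)$ — which holds by Salvatori's theorem, since $G$ is amenable and quasi-transitive — into the definition of amenability at each stage, and then enlarging the set it hands back just enough to keep the sequence increasing and to swallow $C_{n+1}$. Note first that $\Id\in C_n$ for every $n$, since $\Id\in C_0$ and the $C_n$ are increasing.

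Set $B_0:=C_0$. Suppose compact sets $B_0\subseteq\cdots\subseteq B_n$ have been built with $C_i\subseteq B_i$ and the earlier instances of the desired inequality. Let $D_n:=B_n\cup C_{n+1}$, a compact set with $\Id\in D_n$. Apply amenability to the compact set $D_nD_n$ with $\delta:=\eps_n$ to obtain a compact $F$ with $\mu(F)>0$ and $\mu(D_nD_nF\,\Delta\,F)<\eps_n\mu(F)$; since $\Id\in D_nD_n$ this is the same as $\mu(D_nD_nF\setminus F)<\eps_n\mu(F)$. As explained below we may moreover take $\Id\in F$. Now set
$$B_{n+1}:=D_nF .$$
Then $B_{n+1}$ is compact; since $\Id\in F$ we get $B_{n+1}\supseteq D_n\supseteq B_n\cup C_{n+1}$, so the sequence stays increasing and contains $C_{n+1}$; and since $\Id\in D_n$ we get $B_{n+1}\supseteq F$, hence $\mu(B_{n+1})\ge\mu(F)>0$. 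Finally, using $B_n\subseteq D_n$, $F\subseteq D_nF=B_{n+1}$, and $\Id\in B_n$ (so that $B_nB_{n+1}\supseteq B_{n+1}$),
$$\mu(B_nB_{n+1}\,\Delta\,B_{n+1})=\mu(B_nB_{n+1}\setminus B_{n+1})\le\mu(D_nD_nF\setminus F)<\eps_n\mu(F)\le\eps_n\mu(B_{n+1}),$$
which is exactly the bound required, so the recursion goes through. (The reason $D_nD_n$, rather than $D_n$, must be the Følner target is that $B_nB_{n+1}=B_nD_nF\subseteq D_nD_nF$, and it is $\mu(D_nD_nF\setminus F)$ that needs to be small.)

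The only step that does more than shuffle products of compact sets is arranging $\Id\in F$ for the Følner set $F$ of $D_nD_n$, and I expect this to be the one real obstacle. The cleanest way is translation: starting from any compact $F_0$ with $\mu(D_nD_nF_0\,\Delta\,F_0)<\eps_n\mu(F_0)$, pick $g\in F_0$ and set $F:=F_0g^{-1}$, so $\Id\in F$; since $\Gamma$ is unimodular (it is standard that the automorphism group of an amenable quasi-transitive graph is amenable and unimodular), right invariance of $\mu$ gives $\mu(F)=\mu(F_0)$ and $\mu(D_nD_nF\,\Delta\,F)=\mu(D_nD_nF_0\,\Delta\,F_0)$, so the properties used above are preserved. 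If one prefers to avoid unimodularity, take instead a F{\o}lner sequence $(F_k)$ with $\mu(F_k)\to\infty$ — available since $\Gamma$ is $\sigma$-compact, amenable, and non-compact when $G$ is infinite (it acts with finitely many orbits on the infinite set $V(G)$; the finite-graph case is trivial) — choose $k$ with $\mu(D_nD_nF_k\setminus F_k)<\tfrac{\eps_n}{3}\mu(F_k)$ and $\mu(D_nD_nD_n)<\tfrac{\eps_n}{3}\mu(F_k)$, and take $B_{n+1}:=F_k\cup D_nD_n$; then $D_nB_{n+1}\subseteq(D_nF_k)\cup(D_nD_nD_n)$, and a two-term estimate gives the inequality with room to spare. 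Either way the verification reduces to the displayed computation.
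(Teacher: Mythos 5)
Your argument is correct, and it follows the same recursive construction as the paper's proof: apply amenability of $\Aut(G)$ (via Salvatori) to a suitable "double product'' compact set to obtain a F{\o}lner set, then define $B_{n+1}$ as a product of earlier data with that F{\o}lner set. The paper uses the product $B_nC_{n+1}$ where you use the union $D_n=B_n\cup C_{n+1}$ — a cosmetic difference, since both contain $B_n$ and $C_{n+1}$ and both produce essentially the same estimate $\mu(B_nB_{n+1}\setminus B_{n+1})\le\mu((\text{big set})F\setminus F)<\eps_n\mu(F)\le\eps_n\mu(B_{n+1})$.

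One point you handle more carefully than the paper: after setting $B_{n+1}=D_nF$ (respectively $B_nC_{n+1}B'_{n+1}$), concluding $B_n,C_{n+1}\subseteq B_{n+1}$ requires the identity to lie in the F{\o}lner set $F$ (respectively $B'_{n+1}$). The paper asserts that these inclusions "hold since $\Id\in C_0$,'' but $\Id\in C_0$ alone only gives $\Id\in B_n\cap C_{n+1}$, not $\Id\in B'_{n+1}$; some normalization of the F{\o}lner set is still needed. You make this explicit and give two fixes: translating $F_0\mapsto F_0g^{-1}$ using unimodularity of $\Aut(G)$ (which indeed holds for amenable quasi-transitive graphs, by Soardi--Woess/Salvatori), or padding a F{\o}lner set of large measure with $D_nD_n$ and doing a two-term estimate. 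Either route closes the gap. So the argument is the same in substance, but yours fills in a detail the paper leaves implicit.
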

    \begin{proof}
    Let $B_0 = C_0$. Assume $B_n$ is defined, then by amenability of $\Aut(G)$, let $B'_{n+1}\subseteq \Aut(G)$ be a compact set with: 
    $$\frac{\mu( B_n B_nC_{n+1}B'_{n+1}\Delta B'_{n+1})}{\mu(B'_{n+1})} < \eps_n$$
    Then take $B_{n+1} = B_nC_{n+1}B'_{n+1}$. Note that:
    $$\frac{\mu( B_nB_{n+1})}{\mu(B_{n+1})} \leq \frac{\mu( B_nB_{n+1})}{\mu(B'_{n+1})} < \eps_n$$
    And the inclusions $B_n, C_{n+1}\subseteq B_{n+1}$ hold since $\Id \in C_0$.
\end{proof}

Fix a positive decreasing sequence $\eps_n\to 0$ and $B_n$ as in the above claim.\\

We can define an action $\Aut(G)$ on the set on fractional matchings of $G$ via the diagonal action $\gamma \cdot f ((u,v)) = f((\gamma(u),\gamma(v)))$. We can now define the automorphism invariant fractional perfect matching $f'$.
$$
f'(e) =\LIM_{n} \frac{1}{\mu(B_n)}\int_{B_n} \gamma\cdot f(e) d\mu(\gamma)
$$
This is a fractional perfect matching, as:
\begin{align*}
   \sum_{e\in N_E(v)}f'(e) &= \LIM_{n} \frac{1}{\mu(B_n)}\int_{B_n} \sum_{e\in N_E(v)}\gamma\cdot f(e) d\mu(\gamma)\\
   &= \LIM_{n} \frac{1}{\mu(B_n)}\int_{B_n} \sum_{e\in N_E(\gamma v)} f(e)\\
   &= \LIM_{n} \frac{1}{\mu(B_n)}\int_{B_n}1 d\mu(\gamma)\\
   &= 1\\
\end{align*}
It is automorphism invariant, since for every $\eta\in\Aut(G)$, there exists an $m\in\N$ such that
$\eta \in B_m$ and we have:
\begin{align*}
   |f'(e) - \eta\cdot f'(e)| &= \LIM_{n} \frac{1}{\mu(B_n)}|(\int_{B_n}\gamma\cdot f(e)d\mu(\gamma) - \int_{B_n}\gamma\eta\cdot f(e)d\mu(\gamma))|\\
   &= \LIM_{n} \frac{1}{\mu(B_n)}(|\int_{B_n}\gamma\cdot f(e)d\mu(\gamma) - \int_{\eta B_n}\gamma\cdot f(e)d\mu(\gamma)|)\\
   &\leq \LIM_{n} \frac{1}{\mu(B_n)}\mu(B_n\Delta \eta B_n)\\
   &\leq \LIM_{n} \frac{1}{\mu(B_n)}\mu(B_n\Delta B_{m} B_n)\\
   &\leq \LIM_{n\geq m} \frac{\mu(B_{n+1}\Delta B_{n} B_{n+1})}{\mu(B_{n+1})}\\
   &\leq \LIM_{n\geq m} \eps_n = 0
\end{align*}
\end{propositionproof}

Applying this proposition componentwise to an amenable quasi-transitive Borel graph $G$ shows that in any such graph, there exists a fractional perfect matching that is invariant under automorphisms of its components.\\

\section{Borel Fractional Perfect Matchings in Amenable quasi-transitive Graphs}\label{qtgraphs}

In the next proposition, we will show that for a locally finite Borel graph $G$  where the image of $G$ under the homomorphism $(\phi_V,\phi_E)$ has a Borel weighted fractional perfect matching (as an induced subgraph of $\Mgraph$) then that weighted fractional perfect matching can be pulled back over $\phi_E$ to a Borel fractional perfect matching of $G$.

\begin{prop}\label{pullback}
    Let $G$ be a locally finite Borel graph. If there exists a Borel fractional perfect matching on the weighted multigraph $\Mgraph' = (\Image(\phi_V),\Image(\phi_E),\ep\rest \Image(\phi_E), m\rest \Image(\phi_E))$, then there is a Borel fractional perfect matching of $G$.
\end{prop}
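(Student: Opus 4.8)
The plan is to pull back a Borel fractional perfect matching $f$ from the weighted multigraph $\Mgraph'$ to $G$ along $\phi_E$, and then correct for the multiplicity collapse recorded by the weighting $m$. First I would set, for each edge $h\in E(G)$,
\[
  g(h) = f(\phi_E(h)).
\]
This is Borel because $\phi_E$ is Borel (by the construction in Section \ref{sec:typegraph}) and $f$ is Borel. It is symmetric on $E$ since $\phi_E$ does not depend on an orientation of $h$. The key computation is the vertex constraint: fix $x\in V(G)$. By Lemma \ref{hom_lemma}, $\phi_E$ maps $N_E(x)$ onto $N_\MgraphE(\phi_V(x))$, and by Lemma \ref{lem:deg_lemma}, for each $e\in N_\MgraphE(\phi_V(x))$ the fiber $\{h\in N_E(x): \phi_E(h) = e\}$ has size exactly $m(\phi_V(x),e)$. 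Therefore
\[
  \sum_{h\in N_E(x)} g(h)
  = \sum_{e\in N_\MgraphE(\phi_V(x))} m(\phi_V(x),e)\, f(e)
  = 1,
\]
where the last equality is precisely the statement that $f$ is a fractional perfect matching of the weighted multigraph $\Mgraph'$ at the vertex $\phi_V(x)\in\Image(\phi_V)$. Hence $g$ is a fractional perfect matching of $G$, and it is Borel, which completes the proof.

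I expect the only real subtlety to be bookkeeping rather than a genuine obstacle: one must make sure that $\Mgraph'$ is a legitimate standard Borel weighted multigraph to begin with, i.e. that $\Image(\phi_V)$ and $\Image(\phi_E)$ are Borel subsets of $\MgraphV$ and $\MgraphE$ (so that "Borel fractional perfect matching on $\Mgraph'$" makes sense), and that $\ep$ and $m$ restrict to Borel maps on these images. The images are Borel because $\phi_V$ and $\phi_E$ are countable-to-one (indeed injective on each $\Aut(G)$-orbit by Lemma \ref{lem:aut_lemma}, with only countably many orbits per component) Borel maps between standard Borel spaces, so by the Lusin--Novikov uniformization theorem their images are Borel; alternatively one works directly with the induced sub-multigraph structure already introduced in Section \ref{sec:prelim}. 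Once this is in place, the rest is the three-line calculation above, using Lemmas \ref{hom_lemma} and \ref{lem:deg_lemma} as black boxes. The main content of the theorem therefore sits not here but in later sections, where one actually has to produce the Borel fractional perfect matching on $\Mgraph'$ itself.
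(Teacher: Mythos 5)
Your proof is correct and is essentially the same as the paper's: pull back $f$ via $\phi_E$, partition $N_E(x)$ by the fibers of $\phi_E$, and apply Lemma \ref{hom_lemma} (the fibers cover $N_\MgraphE(\phi_V(x))$) together with Lemma \ref{lem:deg_lemma} (fiber size equals $m(\phi_V(x),e)$) to reduce the vertex constraint to the one for $\Mgraph'$.

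One small caution about your side remark: $\phi_V$ and $\phi_E$ are \emph{not} countable-to-one in general, and Lemma \ref{lem:aut_lemma} shows the opposite of injectivity on orbits --- it shows each map is \emph{constant} on $\Aut$-orbits. For a concrete counterexample, take $G$ to be the Bernoulli graph of $\Z$: every vertex has the same rooted isomorphism type, so $\phi_V$ collapses all of $V$ to a single point of $\MgraphV$, and the fiber is uncountable. Consequently $\Image(\phi_V)$ and $\Image(\phi_E)$ are in general only analytic, not obviously Borel. This does not damage your main argument, since the hypothesis hands you a Borel function $f$ whose composition with the Borel map $\phi_E$ is Borel regardless; and in the application (Proposition \ref{main_prop}) the matching is actually defined on a Borel superset $F'\supseteq\phi_E(E(G))$, so the issue never arises. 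But the Lusin--Novikov justification you offered is incorrect as stated and should be dropped.
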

\begin{proof}
    Let $f'$ be the Borel fractional perfect matching on $\Mgraph'$. Take $f = f'\circ \phi_E$, then $f'$ is a Borel fractional perfect matching on $G$, since for any $v\in V$ we have:
    \begin{align*}
        \sum_{h\in N_E(v)}g(e) &= \sum_{e \in \phi_E(N_E(v))}\sum_{h\in N_E(v)\cap\phi_E^{-1}(e)}g(h)\\
        &= \sum_{e \in \phi_E(N_E(v))}\sum_{h\in N_E(v)\cap\phi_E^{-1}(e)}g'(\phi_E(h))
    \end{align*}
    By Lemma \ref{hom_lemma} we have that $N_E(\phi_V(v))=\phi_E(N_E(v))$.
    \begin{align*}
        &\sum_{e \in \phi_E(N_E(v))}\sum_{h\in N_E(v)\cap\phi_E^{-1}(e)}g'(\phi_E(h))\\&= \sum_{e \in N_E(\phi_V(v))}\sum_{h\in N_E(v)\cap\phi_E^{-1}(e)}g'(\phi_E(e))\\
        &= \sum_{e \in N_E(\phi_V(v))}g'(\phi_E(e))|N_E(v)\cap\phi_E^{-1}(e)|
    \end{align*}
    By Lemma \ref{lem:deg_lemma} we have that $|N_E(v)\cap\phi_E^{-1}(e)| = m(\phi_V(v),e)$.
    \begin{align*}
    &\sum_{e \in N_E(\phi_V(v))}g'(\phi_E(e))|N_E(v)\cap\phi_E^{-1}(e)|\\
        &= \sum_{e \in  N_E(\phi_V(v))}g'(e)m(\phi_V(v),e)\\
        &= 1
    \end{align*}
\end{proof}
In the next proposition, we will show that if $G$ is componentwise quasi-transitive and each component admits an automorphism invariant fractional perfect matching, then $G$ admits a Borel fractional perfect matching. We do this by showing that for such a $G$, the image of $(\phi_G,\phi_E)$ is always mapped to a part of $\Mgraph$ that has a Borel weighted fractional perfect matching. 

\begin{prop}\label{main_prop}
    Let $G$ be a locally finite Borel graph. If $G$ is componentwise quasi-transitive, and each component admits an automorphism invariant fractional perfect matching, then $G$ admits a Borel fractional perfect matching.
\end{prop}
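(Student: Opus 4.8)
By Proposition \ref{pullback}, it suffices to show that the weighted multigraph $\Mgraph' = (\Image(\phi_V), \Image(\phi_E), \ep\rest\Image(\phi_E), m\rest\Image(\phi_E))$ admits a Borel fractional perfect matching. The plan is to build this matching \emph{locally}, directly on $\Mgraph$, by reading off a matching value for each type from the automorphism-invariant fractional perfect matchings given on the components of $G$.

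So the key steps are as follows.

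Let me think about what structure we have. An edge type $e = (H_0, H_1, \dots, h) \in \Image(\phi_E)$ carries the information of the limit graph $(H, h)$, which is (isomorphic to) a connected component of $G$ together with a distinguished edge $h$ in it. Here $H$ is a connected component of $G$ — wait, is it? The image of $\phi_E$ consists of types of edges that actually occur in $G$, so the limit $(H,h)$ is isomorphic to $(C, h')$ where $C$ is the component of $G$ containing some edge $h'$ with $\phi_E(h') = e$. By Lemma \ref{lem:aut_lemma}, two edges in the same component of $G$ have the same $\phi_E$-image iff they are in the same $\Aut$-orbit; so the type $e$ picks out precisely an $\Aut(C)$-orbit of edges in $C$.

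Now use the hypothesis: each component $C$ of $G$ admits an automorphism-invariant fractional perfect matching $f_C$. Because $f_C$ is $\Aut(C)$-invariant, it is constant on each $\Aut(C)$-orbit of edges, hence it assigns a single well-defined value to each edge-type occurring in $C$. Define $f'(e)$ to be this common value of $f_C$ on the orbit corresponding to $e$. The main things to check are:

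First I would check this is well-defined: if $e = \phi_E(h_1) = \phi_E(h_2)$ for edges $h_1, h_2$ in (possibly different) components $C_1, C_2$, then $(C_1, h_1) \cong (C_2, h_2)$ as rooted(-at-an-edge) graphs, so transporting $f_{C_2}$ along this isomorphism gives an automorphism-invariant fractional perfect matching of $C_1$; but I need $f_{C_1}(h_1) = f_{C_2}(h_2)$, which requires the automorphism-invariant fractional perfect matching of a quasi-transitive amenable graph to be \emph{unique}. This is the point I expect to be the main obstacle, and I would handle it by instead constructing $f'$ more carefully: rather than demanding uniqueness, I would fix, in a Borel way, \emph{one} representative component for each isomorphism type and use its chosen $f_C$. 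Concretely, the vertex set $\MgraphV = \CF_\circ$ already encodes each component up to isomorphism; choosing $f'$ amounts to specifying, for each point of $\Image(\phi_E) \subseteq \CF_\mathbf{-}$, a real number, and the constraints $\sum_{e \in N_\MgraphE(v)} f'(e)\, m(v,e) = 1$ for $v \in \Image(\phi_V)$ are exactly the fractional-perfect-matching constraints on $\Mgraph'$. So it is enough to show this linear system has \emph{some} solution with values in $[0,1]$ on each component of $\Mgraph'$, and then invoke a Borel uniformization: the set of components of $\Mgraph'$ is the quotient by a smooth (indeed, since components of $\CF_\circ$-points are determined pointwise) equivalence relation, so I can Borel-select one solution per component.

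Second I would establish existence of a solution on a fixed component $\Mgraph'_0$ of $\Mgraph'$: pick any component $C$ of $G$ with $(\phi_V, \phi_E)(C)$ landing in $\Mgraph'_0$ (one exists by definition of the image), take the automorphism-invariant fractional perfect matching $f_C$, and push it forward: set $f'(\phi_E(h)) = f_C(h)$ for $h \in E(C)$. Lemma \ref{lem:aut_lemma} makes this well-defined on $\Mgraph'_0$ (invariance of $f_C$ under $\Aut(C)$ is exactly what is needed), Lemma \ref{lem:deg_lemma} converts the vertex constraint $\sum_{h \in N_E(x)} f_C(h) = 1$ into $\sum_{e \in N_\MgraphE(\phi_V(x))} f'(e)\, m(\phi_V(x), e) = 1$, and $(\phi_V,\phi_E)$ restricted to $C$ surjects onto $\Mgraph'_0$ by Lemma \ref{hom_lemma} (the image is a union of components of $\Mgraph$), so every vertex of $\Mgraph'_0$ is covered.

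Finally I would assemble these into a single Borel function $f' : \Image(\phi_E) \to [0,1]$ by Borel selection across components, and conclude by Proposition \ref{pullback} that $f = f' \circ \phi_E$ is a Borel fractional perfect matching of $G$. The one genuinely delicate point, as noted, is the Borel selection of one solution per component of $\Mgraph'$; I would carry this out by noting that a component of $\Mgraph'$ is coded by a single element of $\CF_\circ$ (any of its vertices determines the isomorphism type of the limit component, hence the whole finite-valence local structure), so the map sending a component to "its" automorphism-invariant fractional perfect matching, transported onto $\Mgraph$, can be made Borel using a Borel linearization/averaging argument parallel to Proposition \ref{invar} applied uniformly in the parameter.
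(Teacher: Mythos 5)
Your proposal follows essentially the same route as the paper's proof: push forward the automorphism-invariant fractional perfect matching along $(\phi_V,\phi_E)$ using Lemmas \ref{lem:aut_lemma} and \ref{lem:deg_lemma} to show each image component of $\Mgraph$ admits a weighted fractional perfect matching, Borel-select one solution per component of $\Image(\phi_E)$, and pull back via Proposition \ref{pullback}. Your middle paragraph correctly identifies the right mechanism for the selection step: the relation of being in the same $M$-component is smooth (the paper restricts to edges whose $M$-component is finite, which quasi-transitivity guarantees for $\Image(\phi_E)$), and the set of weighted fractional perfect matchings on a finite weighted multigraph is a compact polytope, so Borel uniformization with compact sections applies. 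This is exactly the paper's Claim in the proof.

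However, your final paragraph proposes a different and problematic method for the selection step: making the averaging construction of Proposition \ref{invar} ``Borel uniformly in the parameter.'' That construction uses an ultralimit $\LIM_n$, and ultralimits with respect to a fixed nonprincipal ultrafilter are not Borel functions of the input sequence; the Haar-measure-and-F{\o}lner-set bookkeeping would also have to vary in a Borel way, which is far from automatic. You don't need any of this: the pushforward argument only needs to show \emph{existence} of a weighted fractional perfect matching on each finite $M$-component (making the relevant compact polytope nonempty), and then the general compact-section uniformization theorem picks a solution without any reference to the averaging. You should drop the final paragraph and rely on the uniformization route you already described.
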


\begin{propositionproof} 
    Let $M$ be the type multigraph. Take $F\subseteq \MgraphE$ to be the Borel subset of all $e\in\MgraphE$ whose component in $M$ is finite. Let $F'\subseteq F$ be the set of edges in $M$ whose components in $M$ are finite and admit a weighted fractional perfect matching with respect to the weighting induced on $M$. Note that $F'$ is a union of components of $M$. We can treat $F'$ as a sub-multigraph of $M$, with weighting induced by $m$.\\ 
    
    \begin{numclaim}\label{F'claim}
        $F'$ is Borel and there exists a Borel weighted fractional perfect matching on $F'$.
    \end{numclaim}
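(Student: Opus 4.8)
The plan is to route everything through a Borel map from $F$ into a countable set of finite combinatorial data, and then to make one canonical, automorphism-invariant choice of matching on each finite weighted multigraph that occurs.

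First I would record that $\Mgraph$ is locally finite: for $v=(F_0,\dots,r)\in\MgraphV$ one has $|N_\MgraphE(v)|\le\deg_{(F,r)}(r)<\infty$. Since $\Mgraph$ has countable multiplicity, each $N_\MgraphV(v)$ is Borel, so the ball maps $e\mapsto B_n(e)$ (a finite set of vertices and edges of $\Mgraph$) are Borel functions from $\MgraphE$ to $[\MgraphV\sqcup\MgraphE]^{<\N}$; hence $F=\{e:\exists n\ B_n(e)=B_{n+1}(e)\}$ is Borel, and on $F$ there is a Borel function returning the least such $n$, so that $C(e):=B_n(e)$ is the $\Mgraph$-component of $e$. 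Because $\ep$ and $m$ are Borel, one reads off in a Borel way the isomorphism type of the finite weighted multigraph $C(e)$ together with the marked edge $e$; this is a Borel map $\mathcal C\colon F\to\mathfrak W$, where $\mathfrak W$ is the countable standard Borel space (with the discrete topology) of isomorphism types of edge-pointed finite weighted multigraphs. Letting $\mathcal A\subseteq\mathfrak W$ be the set of types whose underlying weighted multigraph admits a weighted fractional perfect matching, every subset of $\mathfrak W$ is Borel, and $F'=\mathcal C^{-1}(\mathcal A)$ is Borel.

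For the matching, the key step is the canonical choice. For each finite weighted multigraph $W$ admitting a weighted fractional perfect matching, fix a representative and let $P_W\subseteq[0,1]^{E(W)}$ be the polytope of its weighted fractional perfect matchings, which is non-empty, compact and convex. Let $f_W$ be the unique minimizer on $P_W$ of $f\mapsto\sum_{e\in E(W)}f(e)^2$ (unique by strict convexity). The group $\Aut(W)$ acts on $[0,1]^{E(W)}$ by permuting coordinates, preserving $P_W$ (automorphisms preserve incidences and the weights $m$) and preserving the objective, so by uniqueness $f_W$ is $\Aut(W)$-invariant, i.e. constant on orbits of edges. Now for $e\in F'$ set $f(e)=f_W(\psi(e))$ for any isomorphism $\psi$ of $C(e)$ onto the chosen representative $W$ of its type: two such isomorphisms differ by an element of $\Aut(W)$, so this is well defined, and $f(e)$ depends only on $\mathcal C(e)$, hence $f$ factors through the Borel map $\mathcal C$ and a function on the countable set $\mathcal A$ and is therefore Borel. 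Since $F'$ is a union of $\Mgraph$-components and on each component $f$ is the transport of the genuine weighted fractional perfect matching $f_W$, we get $\sum_{e\in N_\MgraphE(v)}f(e)m(v,e)=1$ for every $v$ in an $F'$-component, so $f$ is a Borel weighted fractional perfect matching on $F'$.

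I expect the only real obstacle to be the Borelness of extracting from $e$ the finite weighted component of $e$ \emph{together with} its weights as a map into a countable set; this rests on $\Mgraph$ being locally finite with $\ep$ and $m$ Borel, so that balls — hence components — are Borel functions into a space of finite weighted structures. Once that is in place, the $\ell^2$-minimizing canonical matching, its automorphism invariance, the well-definedness and Borelness of the transport, and the final constraint check are all routine.
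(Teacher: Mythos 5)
Your proof is correct, but it takes a genuinely different route than the paper's. The paper observes that the connectedness equivalence relation is smooth on $F$, fixes a Borel transversal $T$ with selector $s$, and then considers the relation $R$ of pairs $(e,g)$ where $g$ is a weighted fractional perfect matching on the component of $e$. Since each section of $R$ is a (possibly empty) compact polytope, one gets both that $F'=\proj_F(R)$ is Borel and, by Borel uniformization, a Borel selector $u$ of matchings; the final matching is $f'(e)=u(s(e))(e)$. You instead avoid uniformization entirely by making a \emph{canonical} choice on each isomorphism type: the $\ell^2$-minimizing point of the matching polytope, which is unique by strict convexity and hence $\Aut(W)$-invariant, and therefore transports unambiguously across any isomorphism. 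The Borelness then reduces to showing that the ``type of the edge-pointed component'' map $\mathcal{C}\from F\to\mathfrak{W}$ is Borel (via local finiteness, Borel balls, and Borel $\ep$, $m$), since every function out of a countable discrete space is Borel. Your argument front-loads more work establishing the Borel structure of $\Mgraph$ and the Borelness of $\mathcal{C}$ (which the paper's uniformization route does not need explicitly, beyond smoothness on $F$), but it dispenses with the uniformization theorem and produces a concrete canonical matching. Both are valid; the paper's is shorter given the descriptive-set-theoretic toolkit, while yours is more explicit and self-contained.

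Two minor points worth tightening if you were to write this up formally: (i) the Borelness of $e\mapsto B_n(e)$ relies on being able to Borel-enumerate the finite sets $N_\MgraphE(v)$; a one-line appeal to Luzin--Novikov applied to the Borel, countable-section relation $\{(v,e): v\in\ep(e)\}$ closes this; and (ii) it is worth stating explicitly that your $f$ restricted to any $F'$-component $C$ equals $f_W\circ\psi_0$ for a \emph{single} isomorphism $\psi_0\from C\to W$ (any one), which is what makes the vertex constraint $\sum_{e\in N_\MgraphE(v)}f(e)m(v,e)=1$ transport cleanly from $W$ to $C$.
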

    \begin{proof}

    The equivalence relation whose classes are connected components of $M$ is smooth on $F$, so let $T\subseteq F$ be a transversal and $s:F\to T$ be a selector. Now consider $R$ consisting of $(e,g)$ where $e\in F$ and $g$ is a weighted fractional perfect matching on the edge component of $e$.\\

    Since the set of weighted fractional perfect matchings for a finite weighted multigraph graph is a polytope, the section above each $e$ in $R$ is compact, so $F' = \proj_F(R)$ is Borel and there is a Borel uniformization $u$ of $R$. For any $e\in F'$, $u(e)$ is a weighted fractional perfect matching on the component of $e$. We can then define a Borel weighted fractional perfect matching $f': F' \to [0,1]$ via $f'(e) = u(s(e))(e)$.
    
    \end{proof}

    Let $G = (V,E)$ be a locally finite Borel graph where each component is quasi-transitive and admits an automorphism invariant fractional perfect matching. Recall that $(\phi_V,\phi_E)$ denote the previously constructed multigraph homomorphism from $G$ to $M$.
    
    \begin{numclaim}\label{phiclaim}
        $\phi_E(E(G)) \subseteq F'$
    \end{numclaim}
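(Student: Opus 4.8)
The plan is to fix an edge $h\in E(G)$, let $C$ be the connected component of $G$ containing $h$, and show that the connected component of $\phi_E(h)$ in $\Mgraph$ is finite and admits a weighted fractional perfect matching; as $h$ is arbitrary this yields $\phi_E(E(G))\subseteq F'$. By hypothesis $C$ is quasi-transitive and carries a fractional perfect matching $f$ that is invariant under $\Aut(C)$.

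First I would identify the component of $\phi_E(h)$ in $\Mgraph$ with the $(\phi_V,\phi_E)$-image of $C$. An endpoint of $\phi_E(h)$ is $\phi_V(x)$ for an endpoint $x$ of $h$, and by Lemma~\ref{hom_lemma} the edges of $\Mgraph$ incident to $\phi_V(x)$ are exactly $\phi_E(N_E(x))$, whose endpoints, using $\ep(\phi_E(x,y))=\{\phi_V(x),\phi_V(y)\}$ from the same lemma, are among $\{\phi_V(y):y\in N_V(x)\}$. An induction on the radius then shows that the ball of radius $n$ about $\phi_V(x)$ in $\Mgraph$ is contained in $\phi_V(B_n(x))$, with all edges among these vertices lying in $\phi_E(E(C))$; since the reverse inclusions hold because $(\phi_V,\phi_E)$ is a homomorphism and $C$ is connected, the component of $\phi_E(h)$ in $\Mgraph$ has vertex set $\phi_V(V(C))$ and edge set $\phi_E(E(C))$.

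Finiteness follows: quasi-transitivity of $C$ gives finitely many $\Aut(C)$-orbits on $V(C)$, and $\phi_V$ is constant on orbits, so $\phi_V(V(C))$ is finite; and for each $w=\phi_V(x)$ in this finite set, $N_\MgraphE(w)=\phi_E(N_E(x))$ is finite by local finiteness, so $\phi_E(E(C))=\bigcup_{w\in\phi_V(V(C))}N_\MgraphE(w)$ is finite. For the matching, I would push $f$ forward: by the edge part of Lemma~\ref{lem:aut_lemma}, $\phi_E(g)=\phi_E(g')$ for $g,g'\in E(C)$ iff some automorphism of $C$ sends $g$ to $g'$, whence $f(g)=f(g')$ by invariance, so $\bar f(e):=f(g)$ for any $g\in E(C)\cap\phi_E^{-1}(e)$ is well defined on $\phi_E(E(C))$. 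Fixing a vertex $w=\phi_V(x)$ and using $N_\MgraphE(w)=\phi_E(N_E(x))$ (Lemma~\ref{hom_lemma}) and $m(w,e)=|\{g\in N_E(x):\phi_E(g)=e\}|$ (Lemma~\ref{lem:deg_lemma}),
\[
\sum_{e\in N_\MgraphE(w)}\bar f(e)\,m(w,e)=\sum_{e\in\phi_E(N_E(x))}\ \sum_{\substack{g\in N_E(x)\\ \phi_E(g)=e}}f(g)=\sum_{g\in N_E(x)}f(g)=1,
\]
so $\bar f$ is a weighted fractional perfect matching on the (finite) component of $\phi_E(h)$, placing $\phi_E(h)$ in $F'$.

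I do not expect a genuine obstacle. The one step requiring care is the identification of the component of $\phi_E(h)$ in $\Mgraph$ with $(\phi_V,\phi_E)(C)$ via Lemma~\ref{hom_lemma}: without knowing this component is exactly $\phi(C)$ we could not read off its finiteness, nor would pushing $f$ forward cover all of its edges and all of its vertex constraints. Everything else is bookkeeping with Lemmas~\ref{hom_lemma}, \ref{lem:aut_lemma} and \ref{lem:deg_lemma}, together with the observation that $\phi_V$ and $\phi_E$ collapse $\Aut(C)$-orbits.
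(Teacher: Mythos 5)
Your proof is correct and follows essentially the same route as the paper: identify the $\Mgraph$-component of $\phi_E(h)$ with the image of the $G$-component of $h$, push the automorphism-invariant fractional matching forward (well-definedness via Lemma~\ref{lem:aut_lemma}), and verify the vertex constraints using Lemmas~\ref{hom_lemma} and~\ref{lem:deg_lemma}. The only difference is that you spell out the component identification and the finiteness argument explicitly, whereas the paper treats both as immediate consequences of quasi-transitivity and Lemma~\ref{hom_lemma}.
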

    \begin{proof}
         Since $G$ is componentwise quasi-transitive, $\phi_E(h) \in F$ for every $h\in E(G)$. 
         Fix $h\in E(G)$, let $C$ be component of $h$ in $G$. Let $C'$ be the component of $\phi_E(h)$ in $M$. We will show that $\phi_E(h) \in F'$ by constructing a weighted fractional perfect matching on $C'$. \\
         
         Let $g$ be an automorphism invariant fractional perfect matching $C$. Define a function $g'$ on $E(C')$ as follows: 
         for every edge $e$ in $C'$, by Lemma \ref{hom_lemma} there exists an edge $d$ in $E(C)$ such that $\phi_E(d) = e$, then define $g'(e)$ to be $g(d)$. We need to show $g'$ is well defined. Suppose $d,c$ in $E(C)$ with $\phi_E(d) = \phi_E(c)$, then by Lemma \ref{lem:aut_lemma} we have that there is an automorphism of the component of $h$ taking $d$ to $c$. Since $g$ is automorphism invariant, $g(d) = g(c)$.\\

         Now we will show $g'$ is a weighted fractional perfect matching on $C'$. Take any vertex $u$ in $V(C')$, then by Lemma \ref{hom_lemma} there exists $x\in V(C)$ with $\phi_V(x) = u$. Then:
      \begin{align*}
        \sum_{e\in N_E(u)}g'(e)m(u,e)&= \sum_{e\in N_E(\phi_V(x))}g'(e)m(\phi_V(x),e)
    \end{align*}
    By Lemma \ref{lem:deg_lemma}, we have that $m(\phi_V(x),e) = |N_E(x)\cap\phi_E^{-1}(e)|$, so we have:
    \begin{align*}
        \sum_{e\in N_E(\phi_V(x))}g'(e)m(\phi_V(x),e)&=\sum_{e\in N_E(\phi_V(x))}g'(e)|N_E(x)\cap\phi_E^{-1}(e)|\\
        &=\sum_{e\in N_E(\phi_V(x))}\sum_{d\in N_E(x)\cap\phi_E^{-1}(e)}g'(e)\\
        &=\sum_{d\in N_E(x)}g'(\phi_E(d))\\
        &= \sum_{d\in N_E(x)}g(d)\\
        &= 1
    \end{align*}
    This shows $g'$ induces a weighted fractional perfect matching on $C'$, so $\phi_E(h) \in F'$.\\
    \end{proof}

    By Claim \ref{phiclaim} we have have $\phi_E(E(G)) \subseteq F'$. By Claim \ref{F'claim} there exists Borel weighted fractional perfect matching $f'$ on $F'$, and by Lemma \ref{hom_lemma} we have that $f'\rest \phi_E(E(G))$ is a Borel weighted fractional perfect matching of the induced weighted multigraph on $\phi_E(E(G))$. We can now apply Proposition \ref{pullback}, to get a Borel fractional perfect matching on $G$.
\end{propositionproof}
Combining Propositions \ref{main_prop} and \ref{invar}, we get Theorem \ref{thm:mainthm}.

\section{Fractional Matchings in Borel Symmetrically Tileable Graphs}\label{stgraphs}

Now we will look at symmetrically tileable graphs. Unlike in the quasi-transitive context, we can produce a Borel fractional matching that is bounded away from $0$ on the matching kernel. In the proof below we will average over an automorphism group like in Proposition \ref{invar}, but instead of the automorphism group of the entire component we average over the automorphism group of the tile. Below we prove a slightly stronger version of Theorem $\ref{thm:tileable_graph}$.

\begin{prop} \label{prop:tileable_graph}
    If $G$ is a locally finite Borel bipartite graph satisfying Halls theorem, and $G$ is Borel symmetrical tilable, then $G$ admits a Borel fractional perfect matching $f$. Moreover, there exists $\theta > 0$ such that any edge $e$ in the perfect matching kernel satisfies $\theta<f(e)<1-\theta$.
\end{prop}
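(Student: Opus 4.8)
The plan is to build $f$ one tile at a time from a single canonical fractional perfect matching on $F$, using half-edge transitivity to make the pieces agree across tile boundaries. Write $\CT$ for the Borel tiling witnessing that $G$ is Borel $F$-tileable for a half-edge transitive $F$, and for $x\in X$ let $S_x\in\CT$ be the unique tile containing $x$; since $\CT$ is Borel and covers $X$ by disjoint finite $G$-connected sets, the map $x\mapsto S_x$ into $[X]^{<\N}$ is Borel, and every tile satisfies $N_{\frac12}(S)\cong F$.

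The first step is to push Hall's condition down to a tile. For any finite $T\subseteq S\in\CT$, each $G$-edge incident to $T$ is either an edge of the induced subgraph on $S$ or a boundary edge of $S$, and these correspond bijectively to the edges and half-edges of $N_{\frac12}(S)$ incident to $T$; so Hall's condition for $G$ yields Hall's condition for $N_{\frac12}(S)\cong F$, and hence $F$ has a perfect matching. Let $PM(F)$ denote the finite, nonempty set of perfect matchings of $F$ and set $f_F=\frac{1}{|PM(F)|}\sum_{M\in PM(F)}\mathbbm 1_M$. Then $f_F$ is a fractional perfect matching on $F$; it is $\Aut(F)$-invariant because automorphisms permute $PM(F)$ (this is the analogue here of averaging over the automorphism group as in Proposition \ref{invar}, except over $\Aut(F)$ instead of the full component group); and since $F$ is half-edge transitive, $f_F$ is constant, say with value $c$, on $H(F)$. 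Moreover $f_F(e)\in(0,1)$ exactly when $e$ lies in the perfect matching kernel $K_F$ of $F$, and since $f_F$ is valued in $\tfrac1N\Z$ with $N=|PM(F)|$ we may fix $\theta=\tfrac1{2N}>0$ so that $\theta<f_F(e)<1-\theta$ for every $e\in K_F$.

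Next I would define $f\from E(G)\to[0,1]$ as follows. If both endpoints of an edge $e$ lie in a common tile $S$, put $f(e)=f_F(\iota_S(e))$ for an arbitrary isomorphism $\iota_S\from N_{\frac12}(S)\to F$; this is independent of the choice of $\iota_S$ since $f_F$ is $\Aut(F)$-invariant. If the endpoints lie in distinct tiles, then $e$ is a boundary half-edge of each, and we put $f(e)=c$, which is consistent because $f_F$ is constant on $H(F)$. One then checks that $f$ is symmetric and, for each $v\in V(G)$ with $S=S_v$ and any fixed $\iota_S$, that $e\mapsto\iota_S(e)$ maps $N_E(v)$ bijectively onto the edges and half-edges of $F$ at $\iota_S(v)$, whence $\sum_{e\in N_E(v)}f(e)=\sum_{\bar e\ni\iota_S(v)}f_F(\bar e)=1$ (using $f(e)=c=f_F(\iota_S(e))$ on boundary edges); and $f$ is Borel because it factors through the finitely-many-valued Borel map sending $e$ either to the isomorphism type of its tile with $e$ marked, or to the flag ``$e$ is a boundary edge''.

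Finally, for the kernel bound, take $e$ in the perfect matching kernel of $G$, witnessed by perfect matchings $M^+\ni e$ and $M^-\not\ni e$ of $G$, and fix a tile $S$ meeting $e$. Each vertex of $S$ is matched in $M^{\pm}$ by a unique $G$-edge, internal to $S$ or on its boundary, so the traces $M^{\pm}_S$ (the internal edges of $S$ lying in $M^{\pm}$, together with the boundary half-edges of $S$ whose $G$-edge lies in $M^{\pm}$) are perfect matchings of $N_{\frac12}(S)\cong F$, with $\iota_S(e)\in\iota_S(M^+_S)$ but $\iota_S(e)\notin\iota_S(M^-_S)$; hence $\iota_S(e)\in K_F$, and in both the internal and boundary cases $f(e)=f_F(\iota_S(e))\in(\theta,1-\theta)$. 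I expect the one genuinely delicate point to be this gluing across tiles — that $f$ is well-defined and Borel on boundary edges shared by two tiles — which is exactly where half-edge transitivity of $F$ and $\Aut(F)$-invariance of $f_F$ are used; the remaining steps (transferring Hall's condition, the degree identity, the trace-of-a-matching observation) are routine.
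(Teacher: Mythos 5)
Your proof is correct and follows the same overall strategy as the paper: construct an $\Aut(F)$-invariant fractional perfect matching on $F$ that is constant on half-edges and supported exactly on edges belonging to some perfect matching of $F$, then transport it tile-by-tile to $G$. The one genuine difference is in how that invariant matching on $F$ is built. The paper first forms $\tau' = \frac{1}{|E_F\cup H_F|}\sum_e M_e$, where $M_e$ is a perfect matching through $e$ when one exists (and arbitrary otherwise), and then symmetrizes by averaging $\tau'\circ\gamma$ over $\gamma\in\Aut(F)$. You instead take the uniform average $f_F = \frac{1}{|PM(F)|}\sum_{M\in PM(F)}\mathbbm 1_M$ over \emph{all} perfect matchings of $F$, which is automatically $\Aut(F)$-invariant because automorphisms permute $PM(F)$. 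Your version is cleaner in two respects: it collapses the paper's two-step construction into one, and it makes the two-sided kernel bound immediate, since $f_F(e)\in(0,1)$ iff $e\in K_F$ by inspection of the uniform average (the paper's argument is phrased only in terms of the lower bound $\eta(e)>0$ and leaves the upper bound $\eta(e)<1$ implicit, where it requires a small extra observation about the $M_e$'s). Your explicit check that Hall's condition passes to the tiles, and the trace-of-a-matching argument placing $\iota_S(e)$ in $K_F$, fill in details the paper states more tersely. On the Borel side you give the same justification as the paper: the value $f(e)$ depends only on the isomorphism type of the marked tile, so no Borel selection of isomorphisms $i_T$ is actually needed.
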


\begin{proof}
Let $G = (V,E)$ locally finite Borel bipartite graph on standard Borel space $V$ which satisfies Hall's condition and is Borel symmetrically tileable. Let $F = (V_F,E_F,H_F,a)$ be a half-edge transitive finite graph with half-edges witnessing that $G$ is Borel symmetrically tilable.\\

First, we construct a fractional perfect matching $\tau'$ of $F$ such that any edge/half-edge $e$ we have $\tau'(e) = 0$ if and only if no matching of $F$ contains $e$. For each edge, $e\in E_F\cup H_F$ let $M_{e}$ be a perfect matching of $F$ which includes $e$ if such a matching exists, otherwise let $M_{e}$ be an arbitrary perfect matching of $F$. Such an $M_e$ will always exist since $F$ is the restriction of a graph satisfying Hall's theorem. Let
$$\tau' = \frac{1}{|E_F\cup H_F|} \sum_{e\in E_F\cup H_F} M_e$$
Since $\tau'$ is a convex combination of perfect matchings, it is a fractional perfect matching.\\

Next, we average $\tau'$ over all automorphisms of $F$, namley let

$$\tau = \frac{1}{|\Aut(F)|} \sum_{\gamma \in \Aut(F)} \tau'\circ\gamma$$Each $\tau'\circ\gamma$ is a fractional perfect matching, so $\tau$ is a fractional perfect matching also. Moreover, $\tau$ is $\Aut(F)$-invariant, since for every $\beta \in \Aut(F)$, $\tau\circ \beta = \frac{1}{|\Aut(F)|} \sum_{\gamma \in \Aut(F)} \tau'\circ\gamma\circ\beta = \frac{1}{|\Aut(F)|} \sum_{\gamma \in \Aut(F)} \tau'\circ\gamma = \tau$. Since $F$ is half-edge transitive, every two half-edges get mapped to one another via an automorphism, so $\tau$ is constant on the half-edges. Let $c$ be that constant.\\

 Now take $\CT\subseteq [V]^{<\omega}$ to be a Borel $F$-tiling of $G$. For each tile $T\in\CT$, fix an isomorphism $i_T:T\to F$. Define fractional perfect matching $\eta:V\to [0,1]$ on $V$ such that for any $e = (x,y)\in E$, if $x$ and $y$ are in the same tile $T\in \CT$, then define $$\eta(e) = \tau(i_T(x),i_T(y))$$ Otherwise,  if $x$ and $y$ are adjacent but not in the same tile, let $$\eta(e) = c$$Then, for any $x\in V$ with $x\in T\in\CT$, $\eta \rest N_{\frac12}(x) = \tau \rest N_{\frac12}(i_T(x))$, so $\eta$ satisfies $\sum_{e\in N_E(x)} \eta(e)= \sum_{e\in N_{E_0}(x)} \tau(e) = 1$, so it is a fractional perfect matching.\\

Finally, we claim there exists $\theta \in (0,1)$ such that for every $e\in E(G)$ in the matching kernel, $\theta <\eta(e)< 1-\theta$ . Since $\tau$ only takes on finitely many values, it suffices to show that $\eta(e) = 0$ then there are no matchings of $G$ which contain $e$. Let $T$ be any tile containing $e$ either as an edge or half-edge. If there is a matching $M$ of $G$ containing $e$, then there is a matching of $G\rest N_{\frac12}(T)$ which is isomorphic to $F$ via $i_T$. This means that the edge $g$ in $F$ corresponding to $e$ in $T$ must have $\tau'(g) >0$, so $\tau(g)>0$, so $\eta(e) = \tau(g) >0$.
\end{proof}
The following corollary can be deduced from the results in \cite{bowen2022perfect}, and it also follows directly from from Proposition \ref{thm:tileable_graph} and the main result in \cite{bowen2022perfect}.
\begin{cor}
    If $G$ is a locally finite Borel bipartite graph satisfying Halls theorem, and $G$ is Borel symmetrical tilable, then $G$ admits a Measurable perfect matching.
\end{cor}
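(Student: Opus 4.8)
The plan is to combine Proposition \ref{prop:tileable_graph} with the rounding machinery for measurable fractional perfect matchings from \cite{bowen2022perfect}. First I would invoke Proposition \ref{prop:tileable_graph} to obtain a Borel fractional perfect matching $f$ on $G$ together with a threshold $\theta > 0$ so that every edge $e$ in the perfect matching kernel satisfies $\theta < f(e) < 1-\theta$. The point of the moreover clause is precisely that $f$ is \emph{non-integer}, with a uniform gap, on exactly the edges that could possibly be toggled — and on edges outside the kernel $f$ already agrees with the unique value forced by every perfect matching, so those edges need no further modification.

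Next I would feed $f$ into the rounding scheme of Bowen, Kun and Sabok. Their argument takes a measurable fractional perfect matching that is bounded away from $\{0,1\}$ (on the relevant edge set) on a locally finite bipartite hyperfinite one-ended graphing and produces a measurable (integral) perfect matching by iteratively pushing flow around augmenting structures; the uniform gap $\theta$ is what guarantees the process converges. I would need to check that $G$ satisfies the hypotheses of their rounding lemma: bipartiteness and Hall's condition are assumed; hyperfiniteness follows because a Borel symmetrically tileable graph is covered by a Borel family of finite connected pieces, hence is smooth-by-finite and in particular hyperfinite for any Borel probability measure; and the behaviour of $f$ on non-kernel edges (where it is already $0$ or $1$) means the rounding only operates inside components of the kernel, where the gap $\theta$ is uniform. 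Strictly one works with a fixed Borel probability measure $\mu$ on $V$ and produces a $\mu$-measurable perfect matching; since $\mu$ was arbitrary this gives the "Measurable perfect matching" conclusion in the sense intended.

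The main obstacle I anticipate is bookkeeping rather than a genuine mathematical difficulty: one must match up the precise form of the input required by \cite{bowen2022perfect} (their statement may be phrased for graphings that are regular or one-ended, or for fractional matchings non-integer \emph{everywhere}) with what Proposition \ref{prop:tileable_graph} actually delivers (non-integer only on the kernel). The clean way around this is to observe that outside the perfect matching kernel $f$ is already integral and consistent with some perfect matching of $G$, so one may restrict attention to the subgraph spanned by the kernel, where $f$ is uniformly bounded away from $\{0,1\}$, apply the rounding result there, and then glue. Alternatively, as the paper notes, the corollary "can be deduced from the results in \cite{bowen2022perfect}" directly, so one could instead cite the relevant theorem of that paper (tileable graphs are hyperfinite, and a bipartite hyperfinite graphing satisfying Hall's condition admits a measurable perfect matching) and skip the fractional step entirely; I would present both routes and pick whichever requires the least re-derivation of hypotheses.
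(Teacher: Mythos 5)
Your overall route -- feed the fractional matching from Proposition~\ref{prop:tileable_graph} into the rounding machinery of \cite{bowen2022perfect} -- matches the one-line justification the paper gives. The content is entirely in the hypothesis-checking, and there is a genuine gap there: you assert that hyperfiniteness ``follows because a Borel symmetrically tileable graph is covered by a Borel family of finite connected pieces, hence is smooth-by-finite.'' This inference is false. The tiling $\CT$ partitions $V$ into finite $G$-connected pieces, so the equivalence relation ``lies in the same tile'' is finite and smooth; but that is only a \emph{sub}-equivalence relation of the connectedness relation $E_G$. The components of $G$ are unions of many tiles glued along boundary (half-)edges and need not form a hyperfinite relation -- take any bounded-degree non-hyperfinite Borel graph and tile it by singletons, or by any finite pieces. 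The paper itself signals this: Figure~\ref{fig:nonamenable tileable} shows a \emph{non-amenable} quasi-transitive graph whose Bernoulli graph is symmetrically tileable, i.e.\ exactly the kind of example the tileability route is supposed to reach beyond the amenable/hyperfinite setting of Theorem~\ref{thm:mainthm}.

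Since both of the routes you sketch (round the fractional matching via the BKS lemma; or cite their measurable Hall-type theorem directly) pass through a hyperfiniteness hypothesis, neither is actually closed by your argument. You would need to identify which statement in \cite{bowen2022perfect} applies without hyperfiniteness (or explain how a Borel tiling by a fixed half-edge-transitive finite piece can be used as a substitute -- e.g.\ by reducing to the quotient graph on tiles, or by showing the kernel-restricted graph has some other structure the rounding argument can exploit), and you would likewise need to address one-endedness/regularity, which you flag but do not resolve. As the paper gives no proof beyond the pointer to \cite{bowen2022perfect}, this is not bookkeeping that can be deferred; it is the crux of the corollary, and the step as written would fail on the paper's own motivating example.
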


\bibliographystyle{alpha}
\bibliography{bibliography}

\end{document}